\numberwithin{equation}{section}
\newtheorem{Theorem}{Theorem}[section]
\newtheorem{Proposition}[Theorem]{Proposition}
\newtheorem{Lemma}[Theorem]{Lemma}
\newtheorem*{theorema*}{Theorem A}
\def\supp{{\rm supp}\ }
\def\bbZ{\mathbb{Z}}
\def\bbR{\mathbb{R}}
\begin{document}

\title[Two weight inequality]{Two weight norm inequalities for the bilinear fractional integrals}

\thanks{This work was partially supported  by the
National Natural Science Foundation of China(11371200) and the Research Fund for the Doctoral Program
of Higher Education (20120031110023).}

\subjclass[2010]{42B20, 42B25}

\author{Kangwei Li}
\address{School of Mathematical Sciences and LPMC,  Nankai University,
      Tianjin~300071, China}
\email{likangwei9@mail.nankai.edu.cn}

\author{Wenchang Sun}

\address{School of Mathematical Sciences and LPMC,  Nankai University,
      Tianjin~300071, China}
\email{sunwch@nankai.edu.cn}

\begin{abstract}
In this paper, we give a characterization of the two weight strong and weak type norm inequalities for the bilinear fractional integrals. Namely, we give the characterization of the following inequalities,
\[
 \|\mathcal I_\alpha (f_1\sigma_1, f_2\sigma_2)\|_{L^q(w)} \le  \mathscr N \prod_{i=1}^2\|f_i\|_{L^{p_i}(\sigma_i)}
\]
and
\[
 \|\mathcal I_\alpha (f_1\sigma_1, f_2\sigma_2)\|_{L^{q,\infty}(w)} \le   \mathscr N_{\textup{weak}} \prod_{i=1}^2\|f_i\|_{L^{p_i}(\sigma_i)},
\]
when $q\ge p_1, p_2>1$ and $p_1+p_2\ge p_1p_2$.
\end{abstract}

\keywords{
Bilinear fractional integral; two weight inequality.}
\maketitle
\section{Introduction and Main Results}
By a weight we mean a positive locally finite Borel measure on $\bbR^n$. We begin with the definition of the bilinear fractional integral
 $\mathcal I_\alpha(\cdot \sigma_1, \cdot \sigma_2)$.  For suitable functions $f_1$ and $f_2$,  define
\[
  \mathcal I_\alpha (f_1\sigma_1, f_2\sigma_2)(x)=\int_{\bbR^{2n}}\frac{f_1(y_1)f_2(y_2)}{(|x-y_1|+|x-y_2|)^{2n-\alpha}}d\sigma_1 d\sigma_2.
\]
Observe that
\[
  |x-y_1|+|x-y_2|\simeq |y_1-x|+|y_1-y_2|\simeq |y_2-x|+|y_2-y_1|.
\]
We know that $\mathcal I_\alpha$ is equivalent to its duals $\mathcal I_\alpha^{1,*}$ and $\mathcal I_\alpha^{2,*}$.

In this paper, we concern the following strong type weighted norm inequality,
\begin{equation}\label{eq:norm}
 \|\mathcal I_\alpha (f_1\sigma_1, f_2\sigma_2)\|_{L^q(w)}\le  \mathscr N \prod_{i=1}^2\|f_i\|_{L^{p_i}(\sigma_i)},
\end{equation}
and the weak type weighted norm inequality,
\begin{equation}\label{eq:weak}
 \|\mathcal I_\alpha (f_1\sigma_1, f_2\sigma_2)\|_{L^{q,\infty}(w)}\le  \mathscr N_{\textup{weak}} \prod_{i=1}^2\|f_i\|_{L^{p_i}(\sigma_i)},
\end{equation}
where $\mathscr N$ and
$\mathscr N_{\textup{weak}}$ are the best constants such that the above inequalities hold, respectively. We aim to give a characterization of \eqref{eq:norm} and \eqref{eq:weak} using   Sawyer type test conditions.

In the linear case, the characterization of weighted norm inequalities have attracted many authors. For the maximal operators, we refer the readers to the works of
Sawyer \cite{S} and Moen \cite{M1}. For the fractional integrals, we refer the readers to \cite{K, Sawyer,Saw, SWZ}. And for the Calder\'on-Zygmund operators, this problem is referred to as the Nazarov-Treil-Volberg conjecture \cite{V}. This conjecture
has been solved for the Hilbert transform, see the remarkable work of Lacey \cite{L1, L2} and the recent work of Hyt\"onen \cite{Hytonen}.
For the vector Riesz transform, it was partially solved by Sawyer, Shen and Uriarte-Tuero in \cite{SSU1}, where they gave a characterization under the assumption that at least one of the two weights is supported on a line. And in \cite{LW}, Lacey and Wick gave a characterization under the hypotheses that the two weights separately are not concentrated on a
set of codimension one, uniformly over locations and scales.
There is also another approach on this topic, namely, finding
 a minimal sufficient condition of the weights such that the two weight inequality holds. We refer the readers to \cite{ACM, CMP, CP1, CP2, L,Lerner} and  references therein. For other related works, we refer the readers to \cite{LMS,LS,MXL,M,NTV1,NTV3}.

Now the story goes to the multilinear case. In \cite{LS}, we studied the characterization of two weight norm inequalities for the multilinear fractional maximal operators using   Sawyer type test conditions. Recall that the multilinear fractional maximal operators are defined by
\[
   \mathcal{M}_\alpha (\vec f)(x)=\sup_{Q\ni x}\prod_{i=1}^m\frac{1}{|Q|^{1-\alpha/{mn}}}\int_Q |f_i(y_i)|dy_i,
\]
where $0\le\alpha<mn$.

In this paper, we give a characterization of the two weight strong and weak type norm inequalities for the bilinear fractional integrals.
Specifically, we prove the following.
\begin{Theorem}\label{thm:main}
Let $\sigma_1,\sigma_2, w$ be positive locally finite Borel measures and $q\ge p_1, p_2>1$ with $p_1+p_2\ge p_1p_2$. Then \eqref{eq:norm}
holds if and only if the following test conditions hold
\begin{eqnarray*}
\mathcal T&:=&\sup_Q\frac{(\int_Q \mathcal I_\alpha (1_Q\sigma_1, 1_Q\sigma_2)^q dw)^{1/q}}{\sigma_1(Q)^{1/{p_1}}\sigma_2(Q)^{1/{p_2}}}<\infty;\\
\mathcal T_1^*&:=&\sup_Q\frac{(\int_Q \mathcal I_\alpha (1_Q w, 1_Q\sigma_2)^{p_1'} d\sigma_1)^{1/{p_1'}}}{w(Q)^{1/{q'}}\sigma_2(Q)^{1/{p_2}}}<\infty;\\
\mathcal T_2^*&:=&\sup_Q\frac{(\int_Q \mathcal I_\alpha (1_Q\sigma_1, 1_Q w)^{p_2'} d\sigma_2)^{1/{p_2'}}}{\sigma_1(Q)^{1/{p_1}}w(Q)^{1/{q'}}}<\infty.
\end{eqnarray*}
Moreover,  $\mathscr N \simeq \mathcal T+\mathcal T_1^*+\mathcal T_2^*$. And \eqref{eq:weak} holds if and only if $\mathcal T_1^*, \mathcal T_2^*<\infty$.
Moreover, $\mathscr N_{\textup{weak}}\simeq \mathcal T_1^*+\mathcal T_2^*$.
\end{Theorem}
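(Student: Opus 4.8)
The proof has the standard shape: necessity is elementary, and sufficiency is obtained by passing to a dyadic model and running a stopping-time (corona) argument.

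\medskip
\noindent\emph{Necessity and reduction to a dyadic model.}
Testing \eqref{eq:norm} with $f_1=f_2=1_Q$ and discarding the part of the $L^q(w)$-norm outside $Q$ gives $\mathcal T\le\mathscr N$; since $\mathcal I_\alpha$ is equivalent to its adjoints $\mathcal I_\alpha^{1,*},\mathcal I_\alpha^{2,*}$, the inequality \eqref{eq:norm} has two equivalent adjoint formulations whose indicator tests give $\mathcal T_1^*,\mathcal T_2^*\lesssim\mathscr N$. For \eqref{eq:weak} one uses $\|F\|_{L^{q,\infty}(w)}\simeq\sup_{0<w(E)<\infty}w(E)^{-1/q'}\int_E F\,dw$ and the adjoint identity $\int_Q h\,\mathcal I_\alpha(1_Q w,1_Q\sigma_2)\,d\sigma_1=\int_Q\mathcal I_\alpha(h1_Q\sigma_1,1_Q\sigma_2)\,dw\lesssim\mathscr N_{\textup{weak}}\,\|h1_Q\|_{L^{p_1}(\sigma_1)}\,\sigma_2(Q)^{1/p_2}w(Q)^{1/q'}$; taking suprema yields $\mathcal T_1^*\lesssim\mathscr N_{\textup{weak}}$, and symmetrically $\mathcal T_2^*\lesssim\mathscr N_{\textup{weak}}$. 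Thus only the converse implications need an argument. I would next pass to a dyadic model: by the one-third trick there is a finite family of shifted dyadic grids $\mathcal D$ with
\[
 \mathcal I_\alpha(f_1\sigma_1,f_2\sigma_2)(x)\lesssim\sum_{\mathcal D}\mathcal I_\alpha^{\mathcal D}(|f_1|\sigma_1,|f_2|\sigma_2)(x),\qquad \mathcal I_\alpha^{\mathcal D}(f_1\sigma_1,f_2\sigma_2):=\sum_{Q\in\mathcal D}\ell(Q)^{\alpha-2n}\Big(\int_Q f_1\,d\sigma_1\Big)\Big(\int_Q f_2\,d\sigma_2\Big)1_Q,
\]
while a geometric-series estimate gives the reverse comparison $\mathcal I_\alpha^{\mathcal D}(1_Q\sigma_1,1_Q\sigma_2)\lesssim\mathcal I_\alpha(1_Q\sigma_1,1_Q\sigma_2)$ on $Q$ (and likewise for the adjoints), so the dyadic test constants $\mathcal T^{\mathcal D},\mathcal T_1^{*,\mathcal D},\mathcal T_2^{*,\mathcal D}$ are all $\lesssim\mathcal T+\mathcal T_1^*+\mathcal T_2^*$. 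Hence it suffices to prove both halves of the theorem for each $\mathcal I_\alpha^{\mathcal D}$; fix $\mathcal D$ and write $\lambda_Q=\ell(Q)^{\alpha-2n}$, $\sigma_i(Q,f_i)=\int_Q f_i\,d\sigma_i$.

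\medskip
\noindent\emph{Sufficiency for \eqref{eq:norm}.}
By duality and monotone convergence it is enough to bound, for nonnegative $f_1,f_2,g$ and a finite subfamily of $\mathcal D$, the (totally symmetric) trilinear form
\[
 \Lambda(f_1,f_2,g)=\sum_Q\lambda_Q\,\sigma_1(Q,f_1)\,\sigma_2(Q,f_2)\,w(Q,g)\ \lesssim\ (\mathcal T^{\mathcal D}+\mathcal T_1^{*,\mathcal D}+\mathcal T_2^{*,\mathcal D})\,\|f_1\|_{L^{p_1}(\sigma_1)}\|f_2\|_{L^{p_2}(\sigma_2)}\|g\|_{L^{q'}(w)}.
\]
I would run three parallel stopping-time decompositions --- $\mathcal F$ adapted to $f_1$ over $\sigma_1$, $\mathcal G$ to $f_2$ over $\sigma_2$, $\mathcal H$ to $g$ over $w$ --- each Carleson over its own measure and freezing the corresponding average on its coronas. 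Writing $\sigma_1(Q,f_1)\lesssim\langle f_1\rangle^{\sigma_1}_{\pi_{\mathcal F}Q}\sigma_1(Q)$ and similarly for the other two factors, and then splitting $\sum_Q$ according to the triple of stopping parents $(F,G,H)=(\pi_{\mathcal F}Q,\pi_{\mathcal G}Q,\pi_{\mathcal H}Q)$ --- necessarily nested when the contribution is nonzero --- one reaches three symmetric subcases determined by which of $F,G,H$ is innermost. In each, the inner sum is at most $\sum_{Q\subseteq L}\lambda_Q\sigma_1(Q)\sigma_2(Q)w(Q)$ for $L$ the innermost cube, and this number equals, interchangeably,
\[
 \int_L\mathcal I_\alpha^{\mathcal D}(1_L\sigma_1,1_L\sigma_2)\,dw=\int_L\mathcal I_\alpha^{\mathcal D,1,*}(1_L w,1_L\sigma_2)\,d\sigma_1=\int_L\mathcal I_\alpha^{\mathcal D,2,*}(1_L\sigma_1,1_L w)\,d\sigma_2,
\]
so by Hölder and one of $\mathcal T^{\mathcal D},\mathcal T_1^{*,\mathcal D},\mathcal T_2^{*,\mathcal D}$ it is $\lesssim\sigma_1(L)^{1/p_1}\sigma_2(L)^{1/p_2}w(L)^{1/q'}$ times that test constant --- this is how the three subcases produce the three test conditions. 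It then remains to sum the resulting positive quantities over the three Carleson families against $\langle f_1\rangle^{\sigma_1}_F\langle f_2\rangle^{\sigma_2}_G\langle g\rangle^w_H$, which I would do by an iterated Carleson-embedding argument (passing, if needed, to the common refinement of the stopping families); it is exactly here that $q\ge p_1,p_2$ and $1/p_1+1/p_2\ge1$ enter, making the Hölder exponents and the discrete embeddings close, much as in the companion estimate for the multilinear fractional maximal operator in \cite{LS}. I expect this final summation --- reconciling the three interlocking stopping trees --- to be the main obstacle.

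\medskip
\noindent\emph{Sufficiency for \eqref{eq:weak}.}
By homogeneity it is enough to bound $w(\Omega_\lambda)$ with $\Omega_\lambda=\{\mathcal I_\alpha^{\mathcal D}(f_1\sigma_1,f_2\sigma_2)>\lambda\}$. Let $\{Q_j\}$ be the maximal, hence pairwise disjoint, dyadic cubes at which the tail $\sum_{P\supseteq Q_j}\lambda_P\sigma_1(P,f_1)\sigma_2(P,f_2)$ first exceeds $\lambda/2$; then $\Omega_\lambda\subseteq\bigcup_j Q_j$ and $\mathcal I_\alpha^{\mathcal D}(1_{Q_j}f_1\sigma_1,1_{Q_j}f_2\sigma_2)>\lambda/2$ on $\Omega_\lambda\cap Q_j$. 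Using the adjoint identity and then crudely enlarging $1_{\Omega_\lambda\cap Q_j}$ to $1_{Q_j}$,
\[
 \lambda\,w(\Omega_\lambda\cap Q_j)\ \lesssim\ \int_{Q_j}f_1\,\mathcal I_\alpha^{\mathcal D,1,*}(1_{\Omega_\lambda\cap Q_j}w,1_{Q_j}f_2\sigma_2)\,d\sigma_1\ \le\ \|f_1\|_{L^{p_1}(\sigma_1,Q_j)}\,\bigl\|\,\mathcal I_\alpha^{\mathcal D,1,*}(1_{Q_j}w,1_{Q_j}f_2\sigma_2)\,\bigr\|_{L^{p_1'}(\sigma_1,Q_j)}.
\]
The point is that the \emph{linear} positive operator $h\mapsto\mathcal I_\alpha^{\mathcal D,1,*}(1_{Q_j}w,h\sigma_2)$ maps $L^{p_2}(\sigma_2|_{Q_j})\to L^{p_1'}(\sigma_1|_{Q_j})$ (here $p_2\le p_1'$ as $1/p_1+1/p_2\ge1$) with norm $\lesssim(\mathcal T_1^{*,\mathcal D}+\mathcal T_2^{*,\mathcal D})\,w(Q_j)^{1/q'}$: its forward and dual test conditions (in the sense of the linear two-weight testing theorem) are precisely $\mathcal T_1^{*,\mathcal D}$ and $\mathcal T_2^{*,\mathcal D}$ --- and \emph{not} $\mathcal T^{\mathcal D}$, because the first argument carries the fixed indicator-weight $1_{Q_j}w$ rather than a varying function; this is why only the two adjoint test conditions occur in the weak-type statement. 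Hence $\lambda\,w(\Omega_\lambda\cap Q_j)\lesssim(\mathcal T_1^*+\mathcal T_2^*)\|f_1\|_{L^{p_1}(\sigma_1,Q_j)}\|f_2\|_{L^{p_2}(\sigma_2,Q_j)}w(Q_j)^{1/q'}$, and summing over $j$ with Hölder, the inclusion $\ell^1\hookrightarrow\ell^s$ for the relevant $s\ge1$ (valid since $1/q\le1/p_1+1/p_2$), the disjointness of the $Q_j$ (so $\sum_j\|f_i\|_{L^{p_i}(\sigma_i,Q_j)}^{p_i}\le\|f_i\|_{L^{p_i}(\sigma_i)}^{p_i}$), and $\sum_j w(Q_j)\le w(\Omega_\lambda)$, gives $\lambda\,w(\Omega_\lambda)^{1/q}\lesssim(\mathcal T_1^*+\mathcal T_2^*)\|f_1\|_{L^{p_1}(\sigma_1)}\|f_2\|_{L^{p_2}(\sigma_2)}$, i.e.\ \eqref{eq:weak} with $\mathscr N_{\textup{weak}}\lesssim\mathcal T_1^*+\mathcal T_2^*$.
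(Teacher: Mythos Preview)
Your necessity arguments and the reduction to a dyadic model are fine and match the paper.  Two genuine issues remain, one in each half.

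\textbf{Weak type.}  The step ``$\sum_j w(Q_j)\le w(\Omega_\lambda)$'' is false: the maximal cubes $Q_j$ \emph{cover} $\Omega_\lambda$, so the inequality goes the other way, and there is no reason the stopping cubes carry little $w$-mass outside $\Omega_\lambda$.  Consequently the factor $w(Q_j)^{1/q'}$ you produce cannot be absorbed into the left side by summation alone.  The paper handles exactly this by a good-$\lambda$ absorption: one splits according to whether $w(\Omega_\lambda\cap Q_j)\ge\delta\,w(Q_j)$; on the ``good'' part $w(Q_j)^{1/q'}\lesssim_\delta w(\Omega_\lambda\cap Q_j)^{1/q'}$ and the sum closes, while the ``bad'' part contributes at most $\delta$ times the weak norm and is absorbed.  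Apart from this, your idea---freeze one slot to $1_{Q_j}w$ and invoke the \emph{linear} positive-operator testing theorem on the remaining slot---is exactly the mechanism behind the paper's Lemma~3.1 (applied with the weights permuted), and is the reason only $\mathcal T_1^*,\mathcal T_2^*$ appear.

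\textbf{Strong type.}  Your three-parallel-corona plan is a genuinely different organization from the paper's, but the step you flag as ``the main obstacle'' is not a detail---it is where the argument as sketched breaks.  After freezing the three averages and summing over the triple corona you are left with, say in the case $L=H$,
\[
\sum_{H\in\mathcal H}\langle f_1\rangle^{\sigma_1}_{\pi_{\mathcal F}H}\,\langle f_2\rangle^{\sigma_2}_{\pi_{\mathcal G}H}\,\langle g\rangle^{w}_{H}\,\sigma_1(H)^{1/p_1}\sigma_2(H)^{1/p_2}w(H)^{1/q'}.
\]
The family $\mathcal H$ is Carleson with respect to $w$, not $\sigma_1$ or $\sigma_2$, so $\sum_{H\subset F}\sigma_1(H)$ need not be controlled by $\sigma_1(F)$; the naive H\"older splitting therefore does not close, and the exponent budget $1/p_1+1/p_2+1/q'>1$ makes a single-shot H\"older impossible.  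The paper avoids this by \emph{not} running three coronas at once: it first proves the ``special case'' (one argument an indicator, Lemma~3.1) via level sets and a \emph{single} principal-cube family in the live variable, and then bootstraps the general case by repeating the same maneuver in the other variable.  In effect the paper linearizes twice rather than trilinearizing once, and at each stage only one Carleson family has to be summed against its own measure.  If you want to keep your symmetric trilinear form, you would need an additional device---e.g.\ passing to the common refinement of the three stopping trees and re-deriving Carleson packing for the refinement---which is substantially more than what you have written.
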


This paper is organized as follows. In Section~\ref{sec:s1}, we reduce the problem to the dyadic bilinear fractional integral and give some preliminary estimates. In Section~\ref{sec:s2}
 and Section~\ref{sec:s3}, we  give a proof for Theorem~\ref{thm:main}.

\section{Preliminaries}\label{sec:s1}
In this section, as in \cite{CM}, we define two dyadic versions of the bilinear fractional integral. We  show that they are equivalent with
the bilinear fractional integral pointwise.
Firstly, we introduce the following result, which can be found in \cite[Proof of Theorem 1.7]{HP}.
\begin{Proposition}\label{prop:p1}
There are $2^n$ dyadic grids $\mathscr{D}_t$, $t\in\{0,1/3\}^n$ such that for any  cube
$Q\subset \bbR^n$ there exists a cube $Q_t \in \mathscr{D}_t$ satisfying
 $Q \subset Q_t$ and $l(Q_t)\le 6l(Q)$, where
 \[
   \mathscr{D}_t:=\{2^{-k}([0,1)^n+m+(-1)^k t): k\in\bbZ, m\in\bbZ^n\},\quad t\in\{0, 1/3\}^n.
 \]
\end{Proposition}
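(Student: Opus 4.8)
The plan is to reduce to one dimension and apply the classical ``one-third trick''. Since each grid $\mathscr{D}_t$, $t=(t_1,\dots,t_n)$, is the collection of products $\prod_{i=1}^n 2^{-k}\big([0,1)+m_i+(-1)^k t_i\big)$ of one-dimensional intervals taken at one common scale $2^{-k}$, and any cube $Q$ is a product of $n$ edges of a common length $\ell$, it suffices to prove the one-dimensional statement and then take the product of the edge-wise choices of shift $t_i\in\{0,1/3\}$ — provided the one-dimensional argument picks its working scale in a way that depends only on $\ell$, so that the same level can be used simultaneously in all coordinates.

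For the one-dimensional statement, fix an interval $I$ and $k\in\bbZ$ with $2^{-k-1}\le\ell(I)<2^{-k}$, and declare the working level to be $k-1$ if $\ell(I)<\tfrac23 2^{-k}$ and $k-2$ if $\ell(I)\ge\tfrac23 2^{-k}$ (this depends on $\ell(I)$ alone). If $I$ does not cross a boundary of $\mathscr{D}_0$ at the working level, then $I$ already lies inside a $\mathscr{D}_0$-interval of that level, whose length is at most $6\ell(I)$. If it does cross one such boundary point $x_0$ — it can cross at most one, since consecutive boundaries are $\ge 2^{-k+1}>\ell(I)$ apart at level $k-1$ and farther at level $k-2$ — then $I$ is contained in the open concentric interval of radius $\ell(I)$ about $x_0$. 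Now $x_0$ lies at distance exactly one third of the working side length from the nearest boundary of $\mathscr{D}_{1/3}$ at that level (because the $\mathscr{D}_{1/3}$ boundaries are the $\mathscr{D}_0$ boundaries shifted by $\pm\tfrac13$ of the side length), that is, at distance $\tfrac23 2^{-k}$ at level $k-1$ and $\tfrac43 2^{-k}$ at level $k-2$; in both regimes this distance strictly exceeds $\ell(I)$, using $\ell(I)<\tfrac23 2^{-k}$ in the first and $\ell(I)<2^{-k}$ in the second. Hence the concentric interval, and therefore $I$, sits inside the $\mathscr{D}_{1/3}$-interval of the working level that contains $x_0$, again of length at most $6\ell(I)$; the extremal ratio $6$ is attained for $\ell(I)$ just above $\tfrac23 2^{-k}$, handled at level $k-2$.

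To assemble the $n$-dimensional claim, given $Q=\prod_i[a_i,a_i+\ell)$ choose $k$ with $2^{-k-1}\le\ell<2^{-k}$ and the common working level ($k-1$ or $k-2$) as above; applying the one-dimensional construction to each edge produces $t_i\in\{0,1/3\}$, and with $t=(t_1,\dots,t_n)$ the product $Q_t$ of the resulting intervals lies in $\mathscr{D}_t$, contains $Q$, and has $\ell(Q_t)\le 6\ell(Q)$. One should also record that each $\mathscr{D}_t$ is genuinely a dyadic grid — the $2^n$ level-$(k+1)$ cubes contained in a level-$k$ cube partition it — which holds because the offsets of a parent and its children differ by $3(-1)^k t$ times the child side length and $3\cdot\tfrac13=1\in\bbZ$; this is precisely the role of the alternating sign $(-1)^k$. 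I expect the only genuinely delicate point to be this quantitative balance in the boundary-crossing case: the $\tfrac13$ offset must clear the $\mathscr{D}_0$-boundary, yet a crossing interval whose half-length may approach $2^{-k}$ against a level-$(k-1)$ cube of length $2^{-k+1}$ leaves a margin of only $\tfrac13 2^{-k+1}=\tfrac23 2^{-k}$, which is exactly why the two size regimes must be separated and why the constant that appears is $6$. Apart from that, the argument is elementary bookkeeping with no substantial obstacle.
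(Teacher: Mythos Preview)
The paper does not actually prove this proposition; it merely cites \cite[Proof of Theorem~1.7]{HP} and states the result. Your proposal therefore cannot be compared against a proof in the paper, but it does supply a correct, self-contained argument for the cited fact.

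Your approach is the standard ``one-third trick'': reduce to dimension one by exploiting the product structure of the grids, then at a fixed working level argue that an interval either misses all $\mathscr{D}_0$ boundaries or crosses exactly one, in which case the $\tfrac13$-shifted grid captures it. The two-regime split at $\ell(I)=\tfrac23\,2^{-k}$ is exactly what is needed to make the margin $\tfrac13\cdot(\text{working side length})$ exceed $\ell(I)$ while keeping the side-length ratio at most $6$; your computations check out in both regimes. The observation that the working level depends only on $\ell(Q)$, so that all coordinates can use the same level and the product of the one-dimensional choices lands in a single $\mathscr{D}_t$, is the key point for lifting to $\bbR^n$ and you handle it correctly. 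The remark that the alternating sign $(-1)^k$ guarantees nestedness (since $3\cdot\tfrac13\in\bbZ$) is a useful addition that the paper omits. In short, your write-up is a faithful expansion of the argument the paper outsources to \cite{HP}.
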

Given a dyadic grid $\mathscr D$, we define the dyadic bilinear fractional integral
\[
  \mathcal I_{\alpha}^{\mathscr D}(f_1\sigma_1, f_2\sigma_2)(x):=\sum_{Q\in\mathscr D}\prod_{i=1}^2\frac{1}{|Q|^{1-\alpha/{2n}}}
  \int_Q f_i d\sigma_i\cdot \chi_Q(x).
\]
Analogue to the argument in \cite{CM}, we have the following result.
\begin{Proposition}\label{prop:dyadic}
Given $0<\alpha<2n$, positive locally finite Borel measures $\sigma_1, \sigma_2$ and non-negative functions $ f_1, f_2$, then for any dyadic grid $\mathscr D$,
\[
  \mathcal I_{\alpha}^{\mathscr D}(f_1\sigma_1, f_2\sigma_2)(x)\lesssim \mathcal I_{\alpha}(f_1\sigma_1, f_2\sigma_2)(x).
\]
Conversely, we have
\[
  \mathcal I_{\alpha}(f_1\sigma_1, f_2\sigma_2)(x)\lesssim \max_{t\in\{0,1/3\}^n} \mathcal I_{\alpha}^{\mathscr D_t}(f_1\sigma_1, f_2\sigma_2)(x).
\]
\end{Proposition}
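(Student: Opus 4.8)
The plan is to establish both inequalities pointwise: for each fixed $x\in\bbR^n$ we compare the two sides directly, all interchanges of sums and integrals being justified by Tonelli's theorem since every term is non-negative. (In the degenerate case where $\sigma_1$ and $\sigma_2$ both have an atom at $x$ and $f_1(x),f_2(x)>0$, both sides of each inequality are $+\infty$, so it can be ignored.)

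For the first inequality, fix $x$ and a dyadic grid $\mathscr{D}$. In the sum defining $\mathcal I_\alpha^{\mathscr D}(f_1\sigma_1,f_2\sigma_2)(x)$ only the cubes $Q\in\mathscr{D}$ with $x\in Q$ survive, and for such a $Q$ and any $y_1,y_2\in Q$ one has $|x-y_1|+|x-y_2|\lesssim l(Q)$, whence
\[
\frac{1}{|Q|^{1-\alpha/(2n)}}\cdot\frac{1}{|Q|^{1-\alpha/(2n)}}=\frac{1}{l(Q)^{2n-\alpha}}\lesssim\frac{1}{(|x-y_1|+|x-y_2|)^{2n-\alpha}}.
\]
The plan is then to sum over $Q\ni x$, interchange the sum with the integrals in $y_1,y_2$, and observe that the cubes of $\mathscr{D}$ containing a fixed triple $x,y_1,y_2$ form a nested chain whose side lengths are $\gtrsim|x-y_1|+|x-y_2|$; since $2n-\alpha>0$, the resulting geometric series $\sum l(Q)^{-(2n-\alpha)}$ is $\lesssim(|x-y_1|+|x-y_2|)^{-(2n-\alpha)}$, which gives $\mathcal I_\alpha^{\mathscr D}(f_1\sigma_1,f_2\sigma_2)(x)\lesssim\mathcal I_\alpha(f_1\sigma_1,f_2\sigma_2)(x)$.

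For the converse, fix $x$ and decompose $\bbR^{2n}$ into the dyadic annuli $\{2^{k-1}<|x-y_1|+|x-y_2|\le 2^k\}$, $k\in\bbZ$, on which the kernel is $\simeq 2^{-k(2n-\alpha)}$; enlarging the $k$-th annulus to $B(x,2^k)\times B(x,2^k)$ and using $2n-\alpha>0$ to sum a geometric series, we obtain
\[
\mathcal I_\alpha(f_1\sigma_1,f_2\sigma_2)(x)\lesssim\sum_{k\in\bbZ}2^{-k(2n-\alpha)}\Big(\int_{B(x,2^k)}f_1\,d\sigma_1\Big)\Big(\int_{B(x,2^k)}f_2\,d\sigma_2\Big).
\]
Next, for each $k$ I would apply Proposition~\ref{prop:p1} to the cube of side length $2^{k+1}$ centered at $x$ (which contains $B(x,2^k)$) to obtain $t_k\in\{0,1/3\}^n$ and a cube $Q^{(k)}\in\mathscr{D}_{t_k}$ with $x\in Q^{(k)}$, $B(x,2^k)\subset Q^{(k)}$ and $2^{k+1}\le l(Q^{(k)})\le 6\cdot2^{k+1}$, so that $2^{-k(2n-\alpha)}\simeq\prod_{i=1}^2|Q^{(k)}|^{-(1-\alpha/(2n))}$ and each summand is $\lesssim\prod_{i=1}^2|Q^{(k)}|^{-(1-\alpha/(2n))}\int_{Q^{(k)}}f_i\,d\sigma_i$. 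Finally, grouping the scales according to the grid $t_k$ and noting that within a fixed grid $\mathscr{D}_t$ each cube plays the role of $Q^{(k)}$ for only boundedly many $k$, the sum over $\{k:t_k=t\}$ is controlled by $\sum_{Q\in\mathscr{D}_t,\,x\in Q}\prod_i|Q|^{-(1-\alpha/(2n))}\int_Q f_i\,d\sigma_i=\mathcal I_\alpha^{\mathscr{D}_t}(f_1\sigma_1,f_2\sigma_2)(x)$, and summing over the $2^n$ values of $t$ finishes the proof.

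The routine first inequality aside, the one genuinely delicate point is that the grid $\mathscr{D}_{t_k}$ produced by Proposition~\ref{prop:p1} depends on the scale $2^k$, so one cannot telescope the annular sum into a single dyadic operator; the fix is to partition the scales according to the witnessing grid and to use the bounded overlap of the cubes $Q^{(k)}$ inside each grid. The remaining points — the kernel comparison, the geometric series, and Tonelli — are standard bookkeeping.
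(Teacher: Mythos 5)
Your proof is correct, and for both halves it takes a route different from the paper's. For the first inequality the paper truncates to scales $2^{-N}\le l(Q)\le 2^N$, writes $Q_k\times Q_k=\big((Q_k\times Q_k)\setminus(Q_{k-1}\times Q_{k-1})\big)\cup(Q_{k-1}\times Q_{k-1})$, bounds the annular piece by the corresponding slab of the kernel integral (since $|x-y_1|+|x-y_2|\simeq l(Q_k)$ there), absorbs the $Q_{k-1}\times Q_{k-1}$ piece back into the truncated dyadic sum via the factor $2^{\alpha-2n}<1$, and sends $N\to\infty$; you instead use Tonelli to pass the sum over $Q\ni x$ inside the $y_1,y_2$ integrals and then sum the geometric series $\sum_{Q\ni x,y_1,y_2}l(Q)^{-(2n-\alpha)}\lesssim(|x-y_1|+|x-y_2|)^{-(2n-\alpha)}$, which sidesteps the truncation/absorption bookkeeping and makes the degenerate atomic case $y_1=y_2=x$ transparent. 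For the converse inequality the paper simply cites \cite{LMS,M}, so you have actually supplied an argument the paper leaves out; your treatment of the one genuinely delicate point --- that the grid $\mathscr D_{t_k}$ produced by Proposition~\ref{prop:p1} depends on the scale $k$, forcing you to partition scales by the witnessing grid and then invoke the bounded overlap (each $Q\in\mathscr D_t$ can arise as $Q^{(k)}$ for $O(1)$ values of $k$ because $2^{k+1}\le l(Q^{(k)})\le 6\cdot 2^{k+1}$) --- is exactly the right fix. One very minor remark: the first display in your converse argument does not actually require summing a geometric series; it follows directly from enlarging each annulus $\{2^{k-1}<|x-y_1|+|x-y_2|\le 2^k\}$ to $B(x,2^k)\times B(x,2^k)$ and using the kernel comparison on the annulus itself.
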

Notice that with Proposition~\ref{prop:dyadic}, we can get the following
\begin{equation}\label{eq:dyadic}
 \mathcal I_{\alpha}(f_1\sigma_1, f_2\sigma_2)(x)\simeq \sum_{t\in\{0,1/3\}^n} \mathcal I_{\alpha}^{\mathscr D_t}(f_1\sigma_1, f_2\sigma_2)(x).
\end{equation}
\begin{proof}
Fix some $x\in\bbR^n$.
Denote by $\{Q_k\}_{k\in\bbZ}$ the unique sequence in $\mathscr D$ such that $x\in Q_k$ and $l(Q_k)=2^k$. Fix $N\ge 1$. We have
\begin{eqnarray*}
&&\sum_{Q\in\mathscr D\atop 2^{-N}\le l(Q)\le 2^N}\prod_{i=1}^2\frac{1}{|Q|^{1-\frac{\alpha}{2n}}}
  \int_Q f_i d\sigma_i\cdot \chi_Q(x)\\
  &=& \sum_{k=-N}^N \prod_{i=1}^2\frac{1}{|Q_k|^{1-\frac{\alpha}{2n}}}
  \int_{Q_k} f_i d\sigma_i\\
  &=& \sum_{k=-N}^N \frac{1}{|Q_k|^{2-\frac{\alpha}{n}}}
  \iint_{(Q_k\times Q_k)\setminus (Q_{k-1}\times Q_{k-1})} f_1 f_2d\sigma_1d\sigma_2\\
  &&\quad+ \sum_{k=-N}^N \prod_{i=1}^2\frac{1}{|Q_k|^{1-\frac{\alpha}{2n}}}
  \int_{Q_{k-1}} f_i d\sigma_i\\
  &\le & C\sum_{k=-N}^N \iint_{(Q_k\times Q_k)\setminus (Q_{k-1}\times Q_{k-1})}\frac{f_1(y_1)f_2(y_2)}{(|x-y_1|+|x-y_2|)^{2n-\alpha}}d\sigma_1 d\sigma_2\\
  &&\quad+2^{\alpha-2n}\prod_{i=1}^2\frac{  \int_{Q_{-N-1}} f_i d\sigma_i}{|Q_{-N-1}|^{1-\frac{\alpha}{2n}}}
+ 2^{\alpha-2n}\sum_{k=-N}^{N-1} \prod_{i=1}^2\frac{1}{|Q_k|^{1-\frac{\alpha}{2n}}}
  \int_{Q_{k}} f_i d\sigma_i\\
  &\le&C  \iint_{Q_N\times Q_N}\frac{f_1(y_1)f_2(y_2)}{(|x-y_1|+|x-y_2|)^{2n-\alpha}}d\sigma_1 d\sigma_2\\
  &&\quad+ 2^{\alpha-2n}\sum_{Q\in\mathscr D\atop 2^{-N}\le l(Q)\le 2^N}\prod_{i=1}^2\frac{1}{|Q|^{1-\frac{\alpha}{2n}}}
  \int_Q f_i d\sigma_i\cdot \chi_Q(x).
\end{eqnarray*}
Since $\alpha<2n$, by rearranging the terms and letting $N\rightarrow \infty$, we get
\[
  \mathcal I_{\alpha}^{\mathscr D}(f_1\sigma_1, f_2\sigma_2)(x)\lesssim \mathcal I_{\alpha}(f_1\sigma_1, f_2\sigma_2)(x).
\]
For the second inequality, we refer the readers to \cite{LMS, M}. This completes the proof.
\end{proof}

Next, we define a sparse version of $\mathcal I_\alpha^{\mathscr D}$. We call $\mathcal S\subset \mathscr D$ a sparse family if for every $Q\in\mathcal S$,
\[
  \bigg|\bigcup_{Q'\subsetneq Q\atop Q'\in\mathcal S}Q'\bigg|\le \frac 1 2|Q|.
\]
Now we can define the sparse dyadic bilinear fractional integral by
\[
   \mathcal I_{\alpha}^{\mathcal S}(f_1\sigma_1, f_2\sigma_2)(x):=\sum_{Q\in\mathcal S}\prod_{i=1}^2\frac{1}{|Q|^{1-\alpha/{2n}}}
  \int_Q f_i d\sigma_i\cdot \chi_Q(x).
\]
Next we show that
$\mathcal I_{\alpha}^{\mathcal S}$ and $ \mathcal I_{\alpha}^{\mathscr D}$ are equivalent in some sense.

\begin{Proposition}\label{prop:sparse}
Given $0<\alpha<2n$, positive locally finite Borel measures $\sigma_1, \sigma_2$ and bounded, non-negative, compactly supported functions $f_1, f_2$, then for any dyadic grid $\mathscr D$, there exists a sparse family $\mathcal S\subset \mathscr D$ such that
\[
  \mathcal I_{\alpha}^{\mathscr D}(f_1\sigma_1, f_2\sigma_2)(x)\lesssim \mathcal I_{\alpha}^{\mathcal S}(f_1\sigma_1, f_2\sigma_2)(x).
\]
\end{Proposition}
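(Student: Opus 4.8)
The plan is to run the standard Calderón–Zygmund style stopping-time construction that converts a full dyadic sum into a sum over a sparse family, but adapted to the \emph{bilinear} structure. The key observation is that $\mathcal I_\alpha^{\mathscr D}$ is built from the averages $A_i(Q):=\frac{1}{|Q|}\int_Q f_i\,d\sigma_i$ (times a fixed power of $|Q|$), so I want to select a subfamily $\mathcal S\subset\mathscr D$ on which these averages grow geometrically, and show that the contribution of the omitted cubes is controlled by the selected ones. Concretely, for a ``top cube'' $Q\in\mathcal S$ I would declare its $\mathcal S$-children to be the maximal dyadic subcubes $Q'\subsetneq Q$ for which $A_i(Q')>2\,A_i(Q)$ for at least one index $i\in\{1,2\}$ (or some fixed large constant $C_0$ in place of $2$). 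Iterating this from an initial collection of maximal cubes in $\mathscr D$ (possible since $f_1,f_2$ are bounded with compact support, so only finitely many scales contribute nontrivially after truncation, or one handles this by an exhaustion argument) produces the candidate family $\mathcal S$.

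The first thing to check is sparseness: if $Q\in\mathcal S$ and $\{Q_j'\}$ are its $\mathcal S$-children, then each $Q_j'$ satisfies $A_i(Q_j')>2A_i(Q)$ for some $i$, so $\bigcup_j Q_j'\subset \{x\in Q: M_{\sigma_1}^{\mathscr D}(f_11_Q)(x)>2A_1(Q)\}\cup\{x: M_{\sigma_2}^{\mathscr D}(f_21_Q)(x)>2A_2(Q)\}$; by the weak $(1,1)$ bound for the dyadic maximal function with respect to the measure... here one must be slightly careful, because the measure $|\cdot|$ (Lebesgue) appears in the sparseness condition while the averages are against $\sigma_i$. The cleanest fix is to instead stop when $\frac{1}{|Q'|}\int_{Q'}f_i\,d\sigma_i$ exceeds a multiple of $\frac{1}{|Q|}\int_Q f_i\,d\sigma_i$, i.e. use \emph{Lebesgue}-normalized averages of the measures $f_i\sigma_i$; then the weak-type $(1,1)$ estimate for the dyadic Hardy–Littlewood maximal operator (against Lebesgue measure, applied to the finite measure $f_i1_Q\,d\sigma_i$) gives $|\bigcup_j Q_j'|\le \frac{1}{C_0}\sum_i \frac{(f_i\sigma_i)(Q)}{A_i(Q)}\cdot(\text{const})$, and choosing $C_0$ large enough (depending only on $n$) yields $\le\frac12|Q|$.

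Next is the pointwise domination. Fix $x$ and let $Q_0\supset Q_1\supset\cdots$ be the chain of cubes in $\mathcal S$ containing $x$; every cube $R\in\mathscr D$ with $x\in R$ lies ``between'' two consecutive $\mathcal S$-cubes $Q_{k+1}\subsetneq R\subseteq Q_k$ (or above the top), and by maximality of the stopping cubes we have $A_i(R)\le 2A_i(Q_k)$ for both $i$. Summing the geometric series in $R$ between $Q_{k+1}$ and $Q_k$ — here I use $0<\alpha<2n$ so that $\sum_{R}|R|^{\alpha/(2n)-1}|R|$-type sums telescope/converge, exactly as in the proof of Proposition~\ref{prop:dyadic} above — the total contribution of that generation is bounded by a constant times the single term $\prod_{i=1}^2\frac{1}{|Q_k|^{1-\alpha/(2n)}}\int_{Q_k}f_i\,d\sigma_i$, and summing over $k$ gives $\mathcal I_\alpha^{\mathscr D}(f_1\sigma_1,f_2\sigma_2)(x)\lesssim \mathcal I_\alpha^{\mathcal S}(f_1\sigma_1,f_2\sigma_2)(x)$.

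The main obstacle I anticipate is the bilinear bookkeeping in this last step: a cube $R$ between generations may have been skipped because the average of $f_1$ was still small while that of $f_2$ was (just) under threshold, so I must track the two indices jointly, and the geometric-decay argument has to be organized so that a single stopping cube $Q_k$ absorbs the entire (doubly-indexed) block of intermediate cubes without double counting. This is handled by noting that between $Q_{k+1}$ and $Q_k$ both averages are comparable to those at $Q_k$, so the product $A_1(R)A_2(R)\lesssim A_1(Q_k)A_2(Q_k)$ uniformly, and the only thing varying is the power $|R|^{\alpha/n}$, whose sum over the dyadic interval of scales is a convergent geometric series dominated by $|Q_k|^{\alpha/n}$; the boundedness and compact support of $f_1,f_2$ guarantee the construction terminates upward and the family $\mathcal S$ is well defined. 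I would also remark that the implicit constants depend only on $n$ and $\alpha$, as in \cite{CM}.
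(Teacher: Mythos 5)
Your proposal is correct in its main ideas but takes a genuinely different route from the paper. You run a recursive stopping time on the two Lebesgue-normalized averages $A_i(Q)=\frac{1}{|Q|}\int_Q f_i\,d\sigma_i$ separately, selecting a child when \emph{either} $A_1$ or $A_2$ jumps by a factor $C_0$, and you prove sparsity by applying the weak $(1,1)$ inequality for the dyadic maximal operator to each measure $f_i 1_Q\,d\sigma_i$ and taking a union bound (which requires $C_0\ge 4$, not merely $C_0=2$, but that is a cosmetic adjustment). The paper instead performs a Calder\'on--Zygmund level-set decomposition directly on the \emph{product} $\prod_{i=1}^2 A_i(Q)$: it sets $\mathcal P_k=\{Q: a^k<\prod_i A_i(Q)\le a^{k+1}\}$ with $a=2^{2(n+1)}$, lets $\mathcal S_k$ be the maximal cubes with $\prod_i A_i>a^k$, and obtains sparsity by a Cauchy--Schwarz argument in $\lambda$: $\sum_\lambda|P_\lambda|\le a^{-(k+1)/2}\prod_i\big(\sum_\lambda\int_{P_\lambda}f_i\,d\sigma_i\big)^{1/2}$, which exploits the bilinear structure so that a single scalar stopping quantity suffices. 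Your approach is closer to the standard ``principal cubes'' machinery used later in the paper, while the paper's trades the union bound for one clean application of Cauchy--Schwarz; both deliver the same pointwise conclusion via the convergent geometric sum in $|R|^{\alpha/n}$, using $\alpha>0$.

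One point you should tighten: there are no ``maximal cubes in $\mathscr D$'' to initialize your recursion, and the parenthetical about only finitely many scales contributing after truncation is not accurate (all scales contribute, just with geometrically decaying weight as $\ell(Q)\to\infty$). You either have to (a) truncate $\mathscr D$ to scales $\le 2^N$ and invoke monotone convergence, which produces a sequence of sparse families rather than one, or (b) fix a dyadic $Q_0$ containing $\supp f_1\cup\supp f_2$, run the stopping time inside $Q_0$, and adjoin the full chain of dyadic ancestors of $Q_0$ to $\mathcal S$ (the ancestor chain is automatically sparse since each ancestor's unique $\mathcal S$-predecessor occupies a $2^{-n}\le\frac12$ fraction), which controls the contribution of cubes above $Q_0$ by the usual geometric tail. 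The paper's level-set construction sidesteps this entirely: the maximal cubes with $\prod_i A_i(Q)>a^k$ exist at every level precisely because $\prod_i A_i(Q)\to 0$ as $\ell(Q)\to\infty$, given that $f_1,f_2$ are bounded with compact support and $\sigma_1,\sigma_2$ are locally finite.
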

Notice that $\mathcal S$ is a subfamily of $\mathscr D$.  So we have
\[
  \mathcal I_{\alpha}^{\mathscr D}(f_1\sigma_1, f_2\sigma_2)(x)\simeq \mathcal I_{\alpha}^{\mathcal S}(f_1\sigma_1, f_2\sigma_2)(x).
\]
The proof of Proposition~\ref{prop:sparse} is not essentially different from the linear case, which can be found in \cite{CM,LMPT, Perez, SW}.
\begin{proof}
Let $a=2^{2(n+1)}$. We split $\mathscr D$ to the following subfamilies,
\[
  \mathcal P_k= \{Q\in \mathscr D: a^k<\prod_{i=1}^2 \frac{1}{|Q|}\int_Q f_i d\sigma_i\le a^{k+1}\}.
\]
Then for every $Q$ with $\prod_{i=1}^2 \frac{1}{|Q|}\int_Q f_i d\sigma_i\neq 0$, there is a unique $k$ such that $Q\in \mathcal P_k$.
Therefore, we can write
\begin{eqnarray*}
 \mathcal I_{\alpha}^{\mathscr D}(f_1\sigma_1, f_2\sigma_2)(x)&=&\sum_k\sum_{Q\in\mathcal P_k}\prod_{i=1}^2\frac{1}{|Q|^{1-\frac{\alpha}{2n}}}
  \int_Q f_i d\sigma_i\cdot \chi_Q(x)\\
  &\le& \sum_k a^{k+1}\sum_{Q\in\mathcal P_k} |Q|^{\frac{\alpha}{n}}\chi_Q(x).
\end{eqnarray*}
Denote by $\mathcal S_k$ the collection of maximal dyadic cubes $P\in\mathscr D$ such that
\[
  \prod_{i=1}^2 \frac{1}{|P|}\int_P f_i d\sigma_i>a^k.
\]
Since $\sigma_1$ and $ \sigma_2$ are locally finite and $f_1$ and $ f_2$ are bounded and compactly supported, such a collection exists. Notice that the cubes in $\mathcal S_k$
are pairwise disjoint. Set $\mathcal S=\cup_k \mathcal S_k$. We have
\begin{eqnarray*}
\mathcal I_{\alpha}^{\mathscr D}(f_1\sigma_1, f_2\sigma_2)(x)
 &\le& \sum_k a^{k+1}\sum_{P\in\mathcal S_k}\sum_{Q\in\mathcal P_k\atop Q\subset P} |Q|^{\frac{\alpha}{n}}\chi_Q(x)\\
 &\lesssim& \sum_k\sum_{P\in\mathcal S_k}\bigg(\prod_{i=1}^2\frac{1}{|P|}
  \int_P f_i d\sigma_i\bigg)\sum_{r=0}^\infty \sum_{Q\in\mathcal P_k, Q\subset P\atop l(Q)=2^{-r}l(P)} |Q|^{\frac{\alpha}{n}}\chi_Q(x)\\
  &=& \sum_{S\in\mathcal S}\bigg(\prod_{i=1}^2\frac{1}{|S|}
  \int_S f_i d\sigma_i\bigg)\sum_{r=0}^\infty\sum_{k: S\in \mathcal S_k}\sum_{Q\in\mathcal P_k, Q\subset S\atop l(Q)=2^{-r}l(S)} |Q|^{\frac{\alpha}{n}}\chi_Q(x)\\
  &\le& \sum_{S\in\mathcal S}\bigg(\prod_{i=1}^2\frac{1}{|S|}
  \int_S f_i d\sigma_i\bigg)\sum_{r=0}^\infty\sum_{Q\in\mathscr D, Q\subset S\atop l(Q)=2^{-r}l(S)} |Q|^{\frac{\alpha}{n}}\chi_Q(x)\\
  &\lesssim& \mathcal I_{\alpha}^{\mathcal S}(f_1\sigma_1, f_2\sigma_2)(x).
\end{eqnarray*}
It remains to demonstrate that $\mathcal S$ is sparse. In fact, fix some $P\in\mathcal S_k$. Let $\{P_\lambda\}_{\lambda\in \Lambda}$ be the collection of the maximal dyadic cubes in $\mathcal S$ which
are strictly contained in $P$. Then for any $\lambda\in \Lambda$,
\[
   \prod_{i=1}^2 \frac{1}{|P_\lambda|}\int_{P_\lambda} f_i d\sigma_i>a^{k+1}.
\]
It follows that
\begin{eqnarray*}
\sum_\lambda |P_\lambda|&\le& a^{-(k+1)/2}\sum_{\lambda}\bigg(\prod_{i=1}^2\int_{P_\lambda} f_i d\sigma_i\bigg)^{1/2}\\
&\le& a^{-(k+1)/2} \prod_{i=1}^2\bigg(\sum_{\lambda}\int_{P_\lambda} f_i d\sigma_i\bigg)^{1/2}\\
&\le& a^{-(k+1)/2} \prod_{i=1}^2 \bigg(\int_{P} f_i d\sigma_i\bigg)^{1/2}\\
&\le& \frac 1 2 |P|,
\end{eqnarray*}
where in the last step  we use the fact that
\[
  \prod_{i=1}^2 \frac{1}{|P|}\int_P f_i d\sigma_i\le 2^{2n} \prod_{i=1}^2 \frac{1}{|\hat{P}|}\int_{\hat P} f_i d\sigma_i \le 2^{2n} a^k,
\]
thanks to the maximal property.  Recall that $\hat P$ denotes the father cube of $P$.

\end{proof}
Now we reduce the problem to show the following result.
\begin{Theorem}\label{thm:dyadic}
Let $\mathscr D$ be a dyadic grid and $\mathcal S\subset \mathscr D$ be a sparse family.
Suppose that $\sigma_1,\sigma_2, w$ are positive Borel measures and $q\ge p_1, p_2>1$ with $p_1+p_2\ge p_1p_2$. Then
\begin{equation}\label{eq:dnorm}
\|\mathcal I_\alpha^{\mathcal S} (f_1\sigma_1, f_2\sigma_2)\|_{L^q(w)}\le  \mathcal N \prod_{i=1}^2\|f_i\|_{L^{p_i}(\sigma_i)}
\end{equation}
holds if and only if the following test conditions hold
\begin{eqnarray*}
\mathcal T^{\mathcal S}&:=&\sup_{Q\in\mathscr D}\frac{(\int_Q \mathcal I_\alpha^{\mathcal S} (1_Q\sigma_1, 1_Q\sigma_2)^q dw)^{1/q}}{\sigma_1(Q)^{1/{p_1}}\sigma_2(Q)^{1/{p_2}}}<\infty;\\
\mathcal T_1^{\mathcal S, *}&:=&\sup_{Q\in\mathscr D}\frac{(\int_Q \mathcal I_\alpha^{\mathcal S} (1_Q w, 1_Q\sigma_2)^{p_1'} d\sigma_1)^{1/{p_1'}}}{w(Q)^{1/{q'}}\sigma_2(Q)^{1/{p_2}}}<\infty;\\
\mathcal T_2^{\mathcal S, *}&:=&\sup_{Q\in\mathscr D}\frac{(\int_Q \mathcal I_\alpha^{\mathcal S} (1_Q\sigma_1, 1_Q w)^{p_2'} d\sigma_2)^{1/{p_2'}}}{\sigma_1(Q)^{1/{p_1}}w(Q)^{1/{q'}}}<\infty.
\end{eqnarray*}
Moreover, if $\mathcal N$ is the best constant such that \eqref{eq:dnorm} holds, then $\mathcal N \simeq \mathcal T^{\mathcal S}+\mathcal T_1^{\mathcal S, *}+\mathcal T_2^{\mathcal S, *}$. And
\begin{equation}\label{eq:dweak}
\|\mathcal I_\alpha^{\mathcal S} (f_1\sigma_1, f_2\sigma_2)\|_{L^{q,\infty}(w)}\le  \mathcal N_{\textup{weak}} \prod_{i=1}^2\|f_i\|_{L^{p_i}(\sigma_i)}
\end{equation}
holds if and only if $\mathcal T_1^{\mathcal S, *}, \mathcal T_2^{\mathcal S, *}<\infty$.
Moreover, if $\mathcal N_{\textup{weak}}$ is the best constant such that \eqref{eq:dweak} holds, then $\mathcal N_{\textup{weak}}\simeq \mathcal T_1^{\mathcal S, *}+\mathcal T_2^{\mathcal S, *}$.
\end{Theorem}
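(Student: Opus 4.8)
Testing \eqref{eq:dnorm} with $f_1=f_2=1_Q$ gives $\mathcal T^{\mathcal S}\le\mathcal N$. Since
\[
\int \mathcal I_\alpha^{\mathcal S}(f_1\sigma_1,f_2\sigma_2)\,g\,dw=\sum_{Q\in\mathscr D}\frac{1}{|Q|^{2-\alpha/n}}\Big(\int_Q f_1\,d\sigma_1\Big)\Big(\int_Q f_2\,d\sigma_2\Big)\Big(\int_Q g\,dw\Big)
\]
is symmetric in its three slots, \eqref{eq:dnorm} is, by duality, equivalent to each of its two ``adjoint'' estimates (interchanging $(f_1,\sigma_1,p_1)\leftrightarrow(g,w,q')$, resp.\ $(f_2,\sigma_2,p_2)\leftrightarrow(g,w,q')$), and testing those with indicators of a cube yields $\mathcal T_1^{\mathcal S,*},\mathcal T_2^{\mathcal S,*}\le\mathcal N$. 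For \eqref{eq:dweak} one uses that, since $q>1$, $\|h\|_{L^{q,\infty}(w)}\simeq\sup\{w(E)^{-1/q'}\int_E h\,dw:0<w(E)<\infty\}$; thus \eqref{eq:dweak} is equivalent to the family of inequalities $\int_E\mathcal I_\alpha^{\mathcal S}(f_1\sigma_1,f_2\sigma_2)\,dw\le \mathcal N_{\mathrm{weak}}\,w(E)^{1/q'}\prod_i\|f_i\|_{L^{p_i}(\sigma_i)}$, and restricting $E$ and the $f_i$ to a fixed cube $Q$ and dualizing one $\sigma_i$-slot gives $\mathcal T_i^{\mathcal S,*}\le\mathcal N_{\mathrm{weak}}$.

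\textbf{Sufficiency, strong type.} We may assume $f_i,g\ge0$ and, after a monotone-convergence reduction, that $\mathcal S$ has a single maximal cube. By duality it suffices to bound the trilinear form $\Lambda(f_1,f_2,g)$ displayed above, with $Q$ running over $\mathcal S$, by $(\mathcal T^{\mathcal S}+\mathcal T_1^{\mathcal S,*}+\mathcal T_2^{\mathcal S,*})\,\|f_1\|_{L^{p_1}(\sigma_1)}\|f_2\|_{L^{p_2}(\sigma_2)}\|g\|_{L^{q'}(w)}$. We run a parallel corona (stopping time) decomposition: let $\mathcal B_i\subset\mathscr D$ be the $\sigma_i$-stopping tree of $f_i$ (the children of a stopping cube being the maximal subcubes on which the $\sigma_i$-average of $f_i$ has doubled) and $\mathcal A\subset\mathscr D$ the $w$-stopping tree of $g$; write $b_i(Q)$, $a(Q)$ for the minimal stopping cubes of $\mathcal B_i$, $\mathcal A$ containing $Q$, so that $\langle f_i\rangle_Q^{\sigma_i}\le 2\langle f_i\rangle_{b_i(Q)}^{\sigma_i}$ and $\langle g\rangle_Q^w\le2\langle g\rangle_{a(Q)}^w$, where $\langle h\rangle_Q^\mu:=\mu(Q)^{-1}\int_Q h\,d\mu$. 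Splitting $\mathcal S$ according to which of $a(Q),b_1(Q),b_2(Q)$ is smallest: in the part where $a(Q)$ is smallest one uses, for a fixed $A\in\mathcal A$,
\[
\sum_{Q\in\mathcal S,\ Q\subseteq A}\frac{\sigma_1(Q)\sigma_2(Q)w(Q)}{|Q|^{2-\alpha/n}}=\int_A\mathcal I_\alpha^{\mathcal S}(1_A\sigma_1,1_A\sigma_2)\,dw\le \mathcal T^{\mathcal S}\,\sigma_1(A)^{1/p_1}\sigma_2(A)^{1/p_2}w(A)^{1/q'}
\]
(by H\"older and the testing condition), while in the part where $b_i(Q)$ is smallest the analogous inner sum is recognized, by the symmetry of the form, as $\int_{B_i}\mathcal I_\alpha^{\mathcal S}(\,\cdot\,)\,d\sigma_i$ and bounded by $\mathcal T_i^{\mathcal S,*}$ times $\sigma_1(B_i)^{1/p_1}\sigma_2(B_i)^{1/p_2}w(B_i)^{1/q'}$. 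Each part thereby reduces to a sum, over stopping cubes $H$ of one of the three trees, of terms of the shape $\langle f_1\rangle_{\ast}^{\sigma_1}\langle f_2\rangle_{\ast}^{\sigma_2}\langle g\rangle_{\ast}^w\,\sigma_1(H)^{1/p_1}\sigma_2(H)^{1/p_2}w(H)^{1/q'}$; since $\tfrac1{p_1}+\tfrac1{p_2}+\tfrac1{q'}\ge1$ one may apply H\"older's inequality (in the form valid when the exponents' reciprocals sum to at least $1$), and then the Carleson embedding theorem for $\mathcal B_i$ with respect to $\sigma_i$ and for $\mathcal A$ with respect to $w$ produces the factors $\|f_i\|_{L^{p_i}(\sigma_i)}$ and $\|g\|_{L^{q'}(w)}$.

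\textbf{The main obstacle and where the hypotheses enter.} The delicate point is the bookkeeping in the cross regimes: because the three stopping trees are built with respect to three different measures and do not refine one another, one cannot simply pull out the ``outer'' $\sigma_j$-average of $f_j$ over the entire corona of an ``inner'' stopping cube $H$ of another tree, since the sum of $\sigma_j(H)$ over all such $H$ need not be controlled by $\sigma_j$ of a single cube. The resolution I envisage is to organize the stopping cubes into principal (maximal) subfamilies and to charge each stopping cube only against its own measure --- equivalently, to run a Sawyer-type induction on the support of the $f_i$, in which the testing condition $\mathcal T^{\mathcal S}$ supplies the local step and the inductive hypothesis (or maximality of the selected cubes) controls the far interaction. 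I expect making these multi-parameter coronas interact so that the resulting sums over principal cubes close up to be the crux of the argument. The condition $p_1+p_2\ge p_1p_2$, i.e.\ $\tfrac1{p_1}+\tfrac1{p_2}\ge1$, together with $q\ge p_1,p_2$ is used precisely to secure $\tfrac1{p_1}+\tfrac1{p_2}+\tfrac1{q'}\ge1$ for the H\"older step above and to allow the disjoint pieces to pack correctly in the embedding estimates.

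\textbf{Sufficiency, weak type, and conclusion.} By the dual description of $L^{q,\infty}(w)$ from the first paragraph, it suffices to bound $\sum_{Q\in\mathcal S}|Q|^{\alpha/n-2}(\int_Q f_1\,d\sigma_1)(\int_Q f_2\,d\sigma_2)\,w(Q\cap E)$ by $(\mathcal T_1^{\mathcal S,*}+\mathcal T_2^{\mathcal S,*})\,w(E)^{1/q'}\prod_i\|f_i\|_{L^{p_i}(\sigma_i)}$ for every $E$ with $w(E)<\infty$. One runs the same scheme but stopping only for $f_1$ and $f_2$: the factor $w(Q\cap E)$ plays the role of the third slot but requires no principal-function decomposition (it is bounded by $w(Q)$), so the condition $\mathcal T^{\mathcal S}$ never enters --- in each regime the inner sum $\sum_{Q\subseteq B}|Q|^{\alpha/n-2}\sigma_1(Q)\sigma_2(Q)w(Q\cap E)=\int_{B\cap E}\mathcal I_\alpha^{\mathcal S}(1_B\sigma_1,1_B\sigma_2)\,dw$ is estimated, keeping the factor $w(B\cap E)^{1/q'}$, by one of the dual testing conditions $\mathcal T_i^{\mathcal S,*}$ (this localized bound being obtained by the same stopping-time argument, or equivalently by the distribution-function inequality $w(\Omega_\lambda)\lesssim\lambda^{-1}\|f_1\|_{L^{p_1}(\sigma_1)}\,\mathcal T_i^{\mathcal S,*}\|f_2\|_{L^{p_2}(\sigma_2)}w(\Omega_\lambda)^{1/q'}$, which self-improves to $w(\Omega_\lambda)\lesssim\lambda^{-q}(\mathcal T_1^{\mathcal S,*}+\mathcal T_2^{\mathcal S,*})^q\prod_i\|f_i\|^q$). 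Summing over the disjoint selected cubes via H\"older, using $q\ge p_i$ so that $\sum_j(\int_{Q_j}|f_i|^{p_i}\,d\sigma_i)^{q/p_i}\le\|f_i\|_{L^{p_i}(\sigma_i)}^{q}$, completes the estimate. Combining the two directions in each case yields $\mathcal N\simeq\mathcal T^{\mathcal S}+\mathcal T_1^{\mathcal S,*}+\mathcal T_2^{\mathcal S,*}$ and $\mathcal N_{\mathrm{weak}}\simeq\mathcal T_1^{\mathcal S,*}+\mathcal T_2^{\mathcal S,*}$.
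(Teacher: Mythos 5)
The proposal takes a genuinely different route from the paper. You dualize to the trilinear form and attempt a three-parameter parallel corona, building independent stopping trees for $f_1$, $f_2$, and $g$ and splitting $\mathcal S$ according to which stopping cube is smallest. The paper does not dualize: it runs a Sawyer-style level-set decomposition of the operator itself (the sets $\Omega_k$, the maximal cubes $Q_j^k$, the disjoint annuli $E(Q_j^k)$, and a $\delta$-absorption step), first establishes a special-case lemma (Lemma~\ref{lm:special}, with $f_1=1_Q$) using principal cubes only for $f_2$, and then treats the general case by principal cubes for $f_1$ and a re-application of the lemma. The hypothesis $p_1'\ge p_2$ enters precisely so that the lemma can be applied again with $p_1'$ in the slot where $q$ sat; it is not used the way you describe.

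The real problem with your sketch is that it leaves a gap exactly at the point you yourself flag as ``the main obstacle.'' In the cross regimes (say $a(Q)$ is smallest), after summing the inner sum over all $Q$ sharing the same triple $(a(Q),b_1(Q),b_2(Q))$ and bounding it by the $\mathcal T^{\mathcal S}$ testing condition at the cube $A=a(Q)$, you still have to sum
\[
\langle f_1\rangle_{B_1}^{\sigma_1}\langle f_2\rangle_{B_2}^{\sigma_2}\langle g\rangle_{A}^{w}\,\sigma_1(A)^{1/p_1}\sigma_2(A)^{1/p_2}w(A)^{1/q'}
\]
over all $A\in\mathcal A$, where $B_1,B_2$ are the $\mathcal B_1,\mathcal B_2$ stopping cubes containing $A$. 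You cannot apply the $\sigma_i$-Carleson embedding for the $\mathcal B_i$-tree because the sum ranges over $A$ (the $\mathcal A$-tree), and for a fixed $B_i$ the collection $\{A\subset B_i\}$ of $\mathcal A$-stopping cubes need not have $\sum_A\sigma_i(A)\lesssim\sigma_i(B_i)$: the trees are built relative to different measures and do not refine one another. This is exactly where the naive parallel-corona argument breaks, and your remedy --- ``organize the stopping cubes into principal subfamilies and charge each stopping cube only against its own measure'' --- is a description of the difficulty, not a resolution. The paper sidesteps it by never forming a $w$-side corona for $g$ at all; the disjointness needed on the $w$-side is supplied by the sets $E(Q_j^k)$ coming from the level sets of the operator, and the principal-cube sums are always taken within a single tree against its own measure. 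A correct parallel-corona treatment of positive dyadic bilinear operators does exist in the literature, but it requires a careful pigeonholing or an induction on scales that your proposal does not supply.

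Two smaller points. First, the attribution of the hypotheses is off: $q\ge p_1,p_2$ is used to turn $\sum_j(\int_{Q_j}f_i^{p_i}\,d\sigma_i)^{q/p_i}$ into $\|f_i\|_{L^{p_i}(\sigma_i)}^q$, while $1/p_1+1/p_2\ge1$ (equivalently $p_1'\ge p_2$) is what lets the special-case lemma be applied in the $p_1'$ exponent; the inequality $1/p_1+1/p_2+1/q'\ge1$ that you invoke is a consequence but is not the mechanism. Second, in the weak-type sufficiency you invoke ``the same stopping-time argument'' and a self-improving distribution inequality without deriving either; the paper's route is to show $\eqref{eq:dweak}\Leftrightarrow\eqref{eq:equi}$ by Kolmogorov plus the level-set argument, then dualize $\eqref{eq:equi}$ in the $\sigma_1$-slot and bound the resulting expression by Lemma~\ref{lm:special} with the roles of $w$ and $\sigma_1$ interchanged and $q$ replaced by $p_1'$; this again uses $p_1'\ge p_2$ and produces the constants $\mathcal T_1^{\mathcal S,*},\mathcal T_2^{\mathcal S,*}$ directly, with no appeal to $\mathcal T^{\mathcal S}$.
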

In the following, we give some elementary estimates. Assume that $f_1$ and $f_2$ are non-negative.  By the monotone convergence theorem, it suffices to consider
\[
  \mathcal I_\alpha^{\mathcal S(R)} (f_1\sigma_1, f_2\sigma_2)(x)=\sum_{Q\in\mathcal S\atop Q\subset R}\prod_{i=1}^2\frac{1}{|Q|^{1-\alpha/{2n}}}
  \int_Q f_i d\sigma_i\cdot \chi_Q(x),
\]
where $R$ is a cube. In fact, we can further assume that the side-length of any cube in $\mathcal S(R)$ is at least $2^{-m}$.
To avoid miscellaneous subscripts, we  omit the index $m$ in the rest of the paper.
Let
\[
  \Omega_k =\{x: \mathcal I_\alpha^{\mathcal S(R)} (f_1\sigma_1, f_2\sigma_2)(x)>2^k\}:=\bigcup_{j}Q_j^k,
\]
where $\{Q_j^k\}_j\subset \mathcal S(R)$ is the collection of maximal dyadic cubes in $\Omega_k$ and we denote this collection by $\mathcal Q_k$.
We have the following dyadic maximum principle:
\[
  \sum_{Q\in\mathcal S, Q\supset Q_j^k\atop Q\subset R}\prod_{i=1}^2\frac{1}{|Q|^{1-\alpha/{2n}}}
  \int_Q f_i d\sigma_i >2^k
\]
and
\[
  \sum_{Q\in\mathcal S, Q\supsetneq Q_j^k\atop Q\subset R}\prod_{i=1}^2\frac{1}{|Q|^{1-\alpha/{2n}}}
  \int_Q f_i d\sigma_i \le 2^k.
\]
Let $E(Q_j^k)= Q_j^k \cap \Omega_{k+1}\setminus \Omega_{k+2}$. Then for any $x\in E(Q_j^k)$, we have
\[
  2^{k+1}< \mathcal I_\alpha^{\mathcal S(R)} (f_1\sigma_1, f_2\sigma_2)(x)\le \mathcal I_\alpha^{\mathcal S(R)} (1_{Q_j^k} f_1\sigma_1, 1_{Q_j^k} f_2\sigma_2)(x)+2^k.
\]
Therefore, for any $x\in E(Q_j^k)$,
\begin{equation}\label{eq:local}
\mathcal I_\alpha^{\mathcal S(R)} (1_{Q_j^k} f_1\sigma_1, 1_{Q_j^k} f_2\sigma_2)(x)>2^k.
\end{equation}
Now we have
\begin{eqnarray*}
&&\|\mathcal I_\alpha^{\mathcal S(R)} (f_1\sigma_1, f_2\sigma_2)\|_{L^q(w)}^q\\
&\lesssim& \sum_{k,j}2^{kq}w(E(Q_j^k))\\
&=& \sum_{k,j: w(E(Q_j^k))>\delta w(Q_j^k)}2^{kq}w(E(Q_j^k))+\sum_{k,j: w(E(Q_j^k))\le\delta w(Q_j^k)}2^{kq}w(E(Q_j^k))\\
&\lesssim& \sum_{k,j\atop w(E(Q_j^k))>\delta w(Q_j^k)}w(E(Q_j^k))^{1-q}\bigg(\int_{E(Q_j^k)}\mathcal I_\alpha^{\mathcal S(R)} (1_{Q_j^k} f_1\sigma_1, 1_{Q_j^k} f_2\sigma_2)d w\bigg)^q\\
&&\quad+ \delta \|\mathcal I_\alpha^{\mathcal S(R)} (f_1\sigma_1, f_2\sigma_2)\|_{L^q(w)}^q.
\end{eqnarray*}
By letting $\delta$ be sufficiently small, it suffices to estimate
\begin{equation}\label{eq:key}
\sum_{k,j\atop w(E(Q_j^k))>\delta w(Q_j^k)}w(E(Q_j^k))^{1-q}\bigg(\int_{E(Q_j^k)}\mathcal I_\alpha^{\mathcal S(R)} (1_{Q_j^k} f_1\sigma_1, 1_{Q_j^k} f_2\sigma_2)d w\bigg)^q.
\end{equation}
In the following, we  assume that all $k$ are in the same parity. Without loss of generality, we further assume that all $k$ are even. Then $E(Q_j^k)$ will be pairwise disjoint. Denote
\[
  \mathbb{K}:=\{k: k\, \textup{is even and}\,  w(E(Q_j^k))>\delta w(Q_j^k)\}.
\]
In the rest of this paper, all the sum on $k$ will be understood as on $k\in\mathbb K$. Notice that for $k\in \mathbb K$, $w(E(Q_j^k))\neq 0$, which means that $Q_j^k\notin \mathcal Q_{k+2}$.

\section{Proof of Theorem~\ref{thm:main}: The Strong Type}\label{sec:s2}
\subsection{The special case}
First, we investigate the special case $f_1=1^{}_Q$ and $\supp f_2\subset Q$, where $Q\in \mathcal S$.
 We have the following result.
 \begin{Lemma}\label{lm:special}
Let $\sigma_1,\sigma_2, w$ be positive locally finite Borel measures and $q\ge p_2$. Then for any sparse family $\mathcal S\subset \mathscr D$ and cube $Q\in\mathcal S$,
\begin{equation}\label{eq:e5}
\int_{Q}\mathcal I_\alpha^{\mathcal S}(1^{}_{Q}\sigma_1, 1^{}_{Q} f_2\sigma_2)^{q} dw\lesssim(\mathcal T^{\mathcal S} +\mathcal T_2^{\mathcal S, *})^q \sigma_1(Q)^{q/{p_1}}\|f_2\|_{L^{p_2}(\sigma_2)}^q.
\end{equation}
 \end{Lemma}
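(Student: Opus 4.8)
The strategy is to run the Calder\'on--Zygmund / level-set decomposition already assembled in Section~\ref{sec:s1}. By monotone convergence it suffices to treat $\mathcal I_\alpha^{\mathcal S(Q)}$, together with the ``tail'' coming from the cubes $P\in\mathcal S$ with $P\supsetneq Q$; since $\alpha<2n$ this tail is pointwise $\lesssim\frac{\sigma_1(Q)\int_Q f_2\,d\sigma_2}{|Q|^{2-\alpha/n}}\,1_Q$, and since $Q\in\mathcal S$ we have $\mathcal I_\alpha^{\mathcal S}(1_Q\sigma_1,1_Q\sigma_2)\ge\frac{\sigma_1(Q)\sigma_2(Q)}{|Q|^{2-\alpha/n}}1_Q$, so one Hölder inequality bounds the $L^q(w)$-norm of the tail by $\mathcal T^{\mathcal S}\sigma_1(Q)^{1/p_1}\|f_2\|_{L^{p_2}(\sigma_2)}$. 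Hence we may work with $\mathcal I_\alpha^{\mathcal S(Q)}(1_Q\sigma_1,1_Q f_2\sigma_2)$; applying the level-set construction of Section~\ref{sec:s1} with $f_1=1_Q$, $R=Q$, the task reduces (cf.\ \eqref{eq:key}) to bounding
\[
\Sigma:=\sum_{k,j}w(E(Q_j^k))^{1-q}\Big(\int_{E(Q_j^k)}\mathcal I_\alpha^{\mathcal S(Q)}(1_{Q_j^k}\sigma_1,1_{Q_j^k}f_2\sigma_2)\,dw\Big)^q,
\]
the sum over the good pairs, the $E(Q_j^k)$ pairwise disjoint and $w(E(Q_j^k))>\delta w(Q_j^k)$.

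For each term I use the symmetry of the trilinear form defining $\mathcal I_\alpha^{\mathcal S}$ to rewrite the inner integral as $\int_{Q_j^k}f_2\cdot\mathcal I_\alpha^{\mathcal S(Q)}(1_{Q_j^k}\sigma_1,1_{E(Q_j^k)}w)\,d\sigma_2$, then apply Hölder's inequality with exponents $p_2,p_2'$, and finally (using $E(Q_j^k)\subseteq Q_j^k\in\mathscr D$, monotonicity in the measure, and the test condition $\mathcal T_2^{\mathcal S,*}$) obtain
\[
\int_{E(Q_j^k)}\mathcal I_\alpha^{\mathcal S(Q)}(1_{Q_j^k}\sigma_1,1_{Q_j^k}f_2\sigma_2)\,dw\lesssim\mathcal T_2^{\mathcal S,*}\,\sigma_1(Q_j^k)^{1/p_1}\,w(Q_j^k)^{1/q'}\,\|f_2 1_{Q_j^k}\|_{L^{p_2}(\sigma_2)}.
\]
Inserting this into $\Sigma$ and using $w(Q_j^k)^{q/q'}\le\delta^{-q/q'}w(E(Q_j^k))^{q/q'}$ together with the identity $1-q+q/q'=0$ makes all the $w$-factors cancel, leaving $\Sigma\lesssim_\delta(\mathcal T_2^{\mathcal S,*})^q\sigma_1(Q)^{q/p_1}\sum_{k,j}\big(\int_{Q_j^k}f_2^{p_2}\,d\sigma_2\big)^{q/p_2}$ after bounding $\sigma_1(Q_j^k)\le\sigma_1(Q)$.

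The genuine obstacle is the last sum: the cubes $Q_j^k$ overlap badly, so $\sum_{k,j}\big(\int_{Q_j^k}f_2^{p_2}d\sigma_2\big)^{q/p_2}$ need not be comparable to $\big(\int_Q f_2^{p_2}d\sigma_2\big)^{q/p_2}$ (already for $\sigma_1,\sigma_2$ point masses it can diverge while the left side of \eqref{eq:e5} is finite), so the crude estimate above has to be replaced by a finer one. The fix is to perform, before taking the $L^{p_2'}(\sigma_2)$-norm, a stopping-time (corona) decomposition of $f_2$ with respect to $\sigma_2$: on each corona block one replaces $f_2$ by a fixed multiple of its $\sigma_2$-average, the averages $\{\langle f_2\rangle_F^{\sigma_2}\}_F$ form a $\sigma_2$-Carleson sequence, and $\mathcal T_2^{\mathcal S,*}$ still applies on each block. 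One then pairs the $w$-packing of the sets $E(Q_j^k)$ (their disjointness) against the $\sigma_2$-packing of the corona cubes, and the remaining exponent arithmetic — in particular passing from the $\ell^{q/p_2}$ to an $\ell^1$ summation, which is exactly where the hypothesis $q\ge p_2$ is used — closes the estimate at $(\mathcal T^{\mathcal S}+\mathcal T_2^{\mathcal S,*})^q\sigma_1(Q)^{q/p_1}\|f_2\|_{L^{p_2}(\sigma_2)}^q$. Everything outside this reorganization of the double sum is routine use of Hölder's inequality, the two test conditions, and the geometry of the dyadic grid.
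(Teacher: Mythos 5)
Your outline matches the paper's argument step for step: dominate the tail by $\mathcal T^{\mathcal S}$, run the Section~\ref{sec:s1} level-set decomposition of $\mathcal I_\alpha^{\mathcal S(Q)}$, dualize the inner integral, apply H\"older in $(p_2,p_2')$, and invoke a corona (principal-cube) construction on $f_2$ against $\sigma_2$ to resolve the overlap of the $Q_j^k$. Where your sketch is vague --- ``the fix is a stopping-time decomposition'' --- the paper's concrete execution is worth spelling out, because the corona alone is not enough. Rather than applying H\"older over all of $Q_j^k$, the paper first splits the $\sigma_2$-integral into the piece over $Q_j^k\setminus\Omega_{k+2}$ and the piece over $Q_j^k\cap\Omega_{k+2}$. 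For the first piece (the paper's $J_1$) the sets $Q_j^k\setminus\Omega_{k+2}$ are pairwise disjoint in $(k,j)$ (for $k$ of a fixed parity), so $\sum_{k,j}\int_{Q_j^k\setminus\Omega_{k+2}}f_2^{p_2}\,d\sigma_2\le\|f_2\|_{L^{p_2}(\sigma_2)}^{p_2}$ with no corona needed, and $q\ge p_2$ converts the $\ell^{q/p_2}$ sum to $\ell^1$. For the second piece (the paper's $J_2$) the domain $Q_j^k\cap\Omega_{k+2}$ is the disjoint union of the next-generation cubes $R\in\mathcal Q_{k+2}$, and on each $R$ the function $\mathcal I_\alpha^{\mathcal S(Q)}(1_{Q_j^k}\sigma_1,1_{E(Q_j^k)}w)$ is \emph{constant}; this is exactly what makes the replacement $f_2 1_R\mapsto E_R^{\sigma_2}f_2\cdot 1_R$ legitimate, and then principal cubes take over. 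The $\Omega_{k+2}$-split is also where $\mathcal T^{\mathcal S}$ actually enters (in the sub-case $G(R)=G(Q_j^k)$), a point your sketch asserts in the final bound but does not derive. So: same approach in substance, with the $\Omega_{k+2}$-split and the constancy of the inner operator on $\mathcal Q_{k+2}$-cubes being the key organizational device that your write-up leaves implicit but that is essential to make the corona argument close.
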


 \begin{proof}
Without loss of generality, we can assume that $f_2$ is non-negative. First of all, notice that
\begin{eqnarray*}
&&1_Q I_\alpha^{\mathcal S}(1^{}_{Q}\sigma_1, 1^{}_{Q} f_2\sigma_2)\\
&=& I_\alpha^{\mathcal S(Q)}(1^{}_{Q}\sigma_1, 1^{}_{Q} f_2\sigma_2)+ 1_Q \sum_{\tilde Q\supsetneq Q\atop \tilde Q\in \mathcal S}\frac{\sigma_1(Q)\int_{Q}f_2 d\sigma_i}{|\tilde Q|^{2-\alpha/{n}}} \\
&\lesssim&  I_\alpha^{\mathcal S(Q)}(1^{}_{Q}\sigma_1, 1^{}_{Q} f_2\sigma_2).
\end{eqnarray*}
Therefore, by the previous arguments, it suffices to estimate
\begin{equation}\label{eq:e4}
\sum_{k,j\atop w(E(Q_j^k))>\delta w(Q_j^k)}w(E(Q_j^k))^{1-q}\bigg(\int_{E(Q_j^k)}\mathcal I_\alpha^{\mathcal S(Q)} (1_{Q_j^k} \sigma_1, 1_{Q_j^k} f_2\sigma_2)d w\bigg)^q.
\end{equation}
We have
\begin{eqnarray*}
 &&\sum_{k,j}w(Q_j^k)^{1-q} \left(\int_{E(Q_j^k)}\mathcal I_\alpha^{\mathcal S(Q)} (1_{Q_j^k} \sigma_1, 1_{Q_j^k} f_2\sigma_2)d w\right)^q\\
 &=& \sum_{k,j}w(Q_j^k)^{1-q} \left(\int_{Q_j^k}f_2\mathcal I_\alpha^{\mathcal S(Q)}(1_{Q_j^k} \sigma_1, 1_{E(Q_j^k)}w) d\sigma_2\right)^q\\
 &\lesssim&  \sum_{k,j}w(Q_j^k)^{1-q} \left(\int_{Q_j^k\setminus \Omega_{k+2}}f_2\mathcal I_\alpha^{\mathcal S(Q)}(1_{Q_j^k} \sigma_1, 1_{E(Q_j^k)}w) d\sigma_2\right)^q\\
 &&+ \sum_{k,j}w(Q_j^k)^{1-q} \left(\int_{Q_j^k\cap \Omega_{k+2}}f_2\mathcal I_\alpha^{\mathcal S(Q)}(1_{Q_j^k} \sigma_1, 1_{E(Q_j^k)}w)d\sigma_2\right)^q\\
 &:=&J_1+J_2.
\end{eqnarray*}
First, we estimate $J_1$. We have
\begin{eqnarray*}
J_1&\le&  \sum_{k,j}w(Q_j^k)^{1-q} \left(\int_{Q_j^k\setminus \Omega_{k+2}}\mathcal I_\alpha^{\mathcal S(Q)}(1_{Q_j^k} \sigma_1, 1_{E(Q_j^k)}w)^{p_2'} d\sigma_2\right)^{q/{p_2'}}\\
&&\quad\times \left(\int_{Q_j^k\setminus \Omega_{k+2}}f_2^{p_2}d\sigma_2\right)^{q/{p_2}}\\
&\le&( \mathcal T_2^{\mathcal S, *})^q\sum_{k,j}w(Q_j^k)^{1-q} w(Q_j^k)^{q-1}\sigma_1(Q_j^k)^{q/{p_1}}\\
&&\quad\times
\left(\int_{Q\setminus \Omega_{k+2}}f_2^{p_2}d\sigma_2\right)^{q/{p_2}}\\
&\le&( \mathcal T_2^{\mathcal S, *})^q \sigma_1(Q)^{q/{p_1}}\left(\sum_{k,j}\int_{Q_j^k\setminus \Omega_{k+2}}f_2^{p_2}d\sigma_2\right)^{q/{p_2}}\\
&\le& ( \mathcal T_2^{\mathcal S, *})^q \sigma_1(Q)^{q/{p_1}} \|f_2\|_{L^{p_2}(\sigma_2)}^q.
\end{eqnarray*}
Next we estimate $J_2$. We can write
\begin{eqnarray*}
&&\int_{Q_j^k\cap \Omega_{k+2}}f_2\mathcal I_\alpha^{\mathcal S(Q)}(1_{Q_j^k} \sigma_1, 1_{E(Q_j^k)}w)d\sigma_2\\
&=& \sum_{R\in\mathcal Q_{k+2}\atop R\subset Q_j^k}\int_{R}f_2\mathcal I_\alpha^{\mathcal S(Q)}(1_{Q_j^k} \sigma_1, 1_{E(Q_j^k)}w)d\sigma_2.
\end{eqnarray*}
Notice that for $x\in R$, $\mathcal I_\alpha^{\mathcal S(Q)}(1_{Q_j^k} \sigma_1, 1_{E(Q_j^k)}w)(x)$ is a constant. Define  $ E_Q^\mu f=\mu(Q)^{-1}\int_Q f d\mu$. Then,
\begin{eqnarray*}
&&\int_{Q_j^k\cap \Omega_{k+2}}f_2\mathcal I_\alpha^{\mathcal S(Q)}(1_{Q_j^k} \sigma_1, 1_{E(Q_j^k)}w)d\sigma_2\\
&=& \sum_{R\in\mathcal Q_{k+2}\atop R\subset Q_j^k}E_R^{\sigma_2}f_2\int_{R}\mathcal I_\alpha^{\mathcal S(Q)}(1_{Q_j^k} \sigma_1, 1_{E(Q_j^k)}w)d\sigma_2.
\end{eqnarray*}

To estimate the above sum we need the tool of principal cubes. Since $Q_j^k\subset Q$,
we denote by $\mathcal G_0$ the collection of the maximal cubes in $\cup_{k\in 2\bbZ}\mathcal Q_k$. We define $\mathcal G_n$ inductively. That is,
\begin{eqnarray*}
\mathcal G_{n+1}&=&\bigcup_{G'\in\mathcal G_n}\{G: \,\textup{maximal dyadic subcube of $G'$ such that}\\
&&\qquad E_G^{\sigma_2}f_2> 4E_{G'}^{\sigma_2}f_2\},
\end{eqnarray*}
where the dyadic system in the above is $\cup_{k\in 2\bbZ}\mathcal Q_k$. Then the collection of principal cubes is $\mathcal G=\cup_{n\ge 0}\mathcal G_n$.
By the definition, we immediately have the following
\begin{equation}\label{eq:principal}
\sum_{G\in\mathcal G}(E_G^{\sigma_2}f_2)^{p_2}\sigma_2(G)\lesssim \|M_{\mathscr D}^{\sigma_2} f_2\|_{L^{p_2}(\sigma_2)}^{p_2}\lesssim \|f_2\|_{L^{p_2}(\sigma_2)}^{p_2}.
\end{equation}
Denote by $G(Q)$ the minimal principal cube contains $Q$. We see from the definition that
\[
  E_Q^{\sigma_2}f_2\le 4 E_{G(Q)}^{\sigma_2}f_2.
\]
Note that
\begin{eqnarray*}
 &&\sum_{R\in\mathcal Q_{k+2}\atop R\subset Q_j^k}E_R^{\sigma_2}f_2\int_{R}\mathcal I_\alpha^{\mathcal S(Q)}(1_{Q_j^k} \sigma_1, 1_{E(Q_j^k)}w)d\sigma_2\\
 &=& \sum_{R\subset Q_j^k, G(R)=G(Q_j^k)\atop R\in \mathcal Q_{k+2}}E_{R}^{\sigma_2}f_2 \int_{R}\mathcal I_\alpha^{\mathcal S(Q)}(1_{Q_j^k} \sigma_1, 1_{E(Q_j^k)}w)(x) d\sigma_2\\
 &&\quad+\sum_{R\subset Q_j^k, R\in\mathcal G\atop R\in \mathcal Q_{k+2}}E_{R}^{\sigma_2}f_2 \int_{R}\mathcal I_\alpha^{\mathcal S(Q)}(1_{Q_j^k} \sigma_1, 1_{E(Q_j^k)}w)(x) d\sigma_2.
\end{eqnarray*}
We have
\begin{eqnarray*}
&&\sum_{k,j}w(Q_j^k)^{1-q} \bigg(\sum_{R\in\mathcal Q_{k+2}\atop R\subset Q_j^k}E_R^{\sigma_2}f_2\int_{R}\mathcal I_\alpha^{\mathcal S(Q)}(1_{Q_j^k} \sigma_1, 1_{E(Q_j^k)}w)d\sigma_2\bigg)^q\\
&\lesssim&\sum_{k,j}w(E(Q_j^k))^{1-q} \bigg( \sum_{R\subset Q_j^k, G(R)=G(Q_j^k)\atop R\in \mathcal Q_{k+2}}E_{R}^{\sigma_2}f_2 \\
&&\times\int_{R}\mathcal I_\alpha^{\mathcal S(Q)}(1_{Q_j^k} \sigma_1, 1_{E(Q_j^k)}w)(x) d\sigma_2\bigg)^q\\
&&+\sum_{k,j}w(Q_j^k)^{1-q} \bigg( \sum_{R\subset Q_j^k, R\in\mathcal G\atop R\in \mathcal Q_{k+2}}E_{R}^{\sigma_2}f_2 \int_{R}\mathcal I_\alpha^{\mathcal S(Q)}(1_{Q_j^k} \sigma_1, 1_{E(Q_j^k)}w)(x) d\sigma_2\bigg)^q\\
&:=& J_{21}+J_{22}.
\end{eqnarray*}

For $J_{21}$, we have
\begin{eqnarray*}
J_{21}&\lesssim& \sum_{G\in\mathcal G}\sum_{k,j\atop G(Q_j^k)=G}w(E(Q_j^k))^{1-q} (E_G^{\sigma_2}f_2)^q\\
&&\quad\times
\bigg( \int_{E(Q_j^k)}\mathcal I_\alpha^{\mathcal S(Q)}(1_{Q_j^k}\sigma_1, 1_{Q_j^k} \sigma_2) dw\bigg)^q\\
&\le& \sum_{G\in\mathcal G}\sum_{k,j\atop G(Q_j^k)=G}(E_G^{\sigma_2}f_2)^q
\int_{E(Q_j^k)}\mathcal I_\alpha^{\mathcal S(Q)}(1_{Q_j^k}\sigma_1, 1_{Q_j^k} \sigma_2)^q dw\\
&\le& \sum_{G\in\mathcal G}(E_G^{\sigma_2}f_2)^q \int_{G}\mathcal I_\alpha^{\mathcal S(Q)}(1_{G}\sigma_1, 1_{G} \sigma_2)^q dw\\
&\le& (\mathcal T^{\mathcal S})^q\sum_{G\in\mathcal G}(E_G^{\sigma_2}f_2)^q \sigma_1(G)^{q/{p_1}}\sigma_2(G)^{q/{p_2}}\\
&\le& (\mathcal T^{\mathcal S})^q \sigma_1(Q)^{q/{p_1}}\|f_2\|_{L^{p_2}(\sigma)}^q.
\end{eqnarray*}

And for $J_{22}$, we have
\begin{eqnarray*}
J_{22}&\le&\sum_{k,j}w(Q_j^k)^{1-q} \bigg( \sum_{R\subset Q_j^k, R\in\mathcal G\atop R\in \mathcal Q_{k+2}}(E_{R}^{\sigma_2}f_2)^{p_2}\sigma_2(R)\bigg)^{q/{p_2}} \\
&&\quad\times\bigg( \sum_{R\subset Q_j^k, R\in\mathcal G\atop R\in \mathcal Q_{k+2}}\sigma_2(R)^{-{\frac{p_2'}{p_2}}}\bigg(\int_{R}\mathcal I_\alpha^{\mathcal S(Q)}(
1_{Q_j^k} \sigma_1, 1_{E(Q_j^k)}w) d\sigma_2\bigg)^{p_2'}\bigg)^{\frac{q}{p_2'}}\\
&\lesssim& \sum_{k,j}w(Q_j^k)^{1-q} \bigg( \sum_{R\subset Q_j^k, R\in\mathcal G\atop R\in \mathcal Q_{k+2}}(E_{R}^{\sigma_2}f_2)^{p_2}\sigma_2(R)\bigg)^{q/{p_2}} \\
&&\quad\times\bigg( \int_{ Q_j^k}\mathcal I_\alpha^{\mathcal S(Q)}(1_{Q_j^k} \sigma_1, 1_{E(Q_j^k)}w)^{p_2'} d\sigma_2\bigg)^{q/{p_2'}}\\
&\lesssim&(\mathcal T_2^{\mathcal S, *})^q \sigma_1(Q)^{q/{p_1}} \bigg(  \sum_{k,j}\sum_{R\subset Q_j^k, R\in\mathcal G\atop R\in \mathcal Q_{k+2}}(E_{R}^{\sigma_2}f_2)^{p_2}\sigma_2(R) \bigg)^{q/{p_2}}\\
&\le& (\mathcal T_2^{\mathcal S, *})^q \sigma_1(Q)^{q/{p_1}}  \bigg( \sum_{G\in\mathcal G}(E_{G}^{\sigma_2}f_2)^{p_2}\sigma_2(G)\bigg)^{q/{p_2}}\\
&\lesssim& (\mathcal T_2^{\mathcal S, *})^q \sigma_1(Q)^{q/{p_1}} \|f_2\|_{L^{p_2}(\sigma_2)}^q,
\end{eqnarray*}
where the fact that $k\in\mathbb K$ and therefore any $R$ appears only once is used.
\end{proof}

\subsection{The general case}
In this subsection, we investigate the general case. Again, we can assume that $f_1$ and $f_2$ are non-negative. By the arguments in Section~\ref{sec:s1},
we only need to estimate the following
\[
 \sum_{k,j}w(Q_j^k)^{1-q}\bigg(\int_{E(Q_j^k)}\mathcal I_\alpha^{\mathcal S(R)} (1_{Q_j^k} f_1\sigma_1, 1_{Q_j^k} f_2\sigma_2)d w\bigg)^q.
\]
Since $p_1+p_2\ge p_1p_2$, we have $p_1'\ge p_2$. Hence
\begin{eqnarray*}
&&\int_{E(Q_j^k)}\mathcal I_\alpha^{\mathcal S(R)} (1_{Q_j^k} f_1\sigma_1, 1_{Q_j^k} f_2\sigma_2)d w\\
&\le&
\int_{Q_j^k\setminus \Omega_{k+2}}f_1\mathcal I_\alpha^{\mathcal S(R)}(1_{E(Q_j^k)}w, 1_{Q_j^k} f_2\sigma_2)d\sigma_1\\
&&+ \int_{Q_j^k\cap \Omega_{k+2}}f_1\mathcal I_\alpha^{\mathcal S(R)}(1_{E(Q_j^k)}w, 1_{Q_j^k} f_2\sigma_2) d\sigma_1\\
&\le& \bigg(\int_{Q_j^k\setminus \Omega_{k+2}}\mathcal I_\alpha^{\mathcal S(R)}(1_{E(Q_j^k)}w, 1_{Q_j^k} f_2\sigma_2)^{p_1'} d\sigma_1\bigg)^{\frac{1}{p_1'}}\bigg(\int_{Q_j^k\setminus \Omega_{k+2}}f_1^{p_1}d\sigma_1\bigg)^{\frac{1}{p_1}}\\
&&\quad+ \sum_{\tilde Q\subset Q_j^k\atop \tilde Q\in\mathcal Q_{k+2}}\int_{\tilde Q}f_1\mathcal I_\alpha^{\mathcal S(R)}(1_{E(Q_j^k)}w, 1_{Q_j^k} f_2\sigma_2) d\sigma_1\\
&\lesssim& (\mathcal T_1^{\mathcal S, *}+\mathcal T_2^{\mathcal S, *})w(Q_j^k)^{1/{q'}}\|f_2\|_{L^{p_2}(\sigma_2)}\bigg(\int_{Q_j^k\setminus \Omega_{k+2}}f_1^{p_1}d\sigma_1\bigg)^{1/{p_1}}\\
&&\quad+ \sum_{\tilde Q\subset Q_j^k\atop \tilde Q\in\mathcal Q_{k+2}}E_{\tilde Q}^{\sigma_1}f_1\int_{\tilde Q}\mathcal I_\alpha^{\mathcal S(R)}(1_{E(Q_j^k)}w, 1_{Q_j^k} f_2\sigma_2) d\sigma_1,
\end{eqnarray*}
where we use Lemma~\ref{lm:special} in the last step.
The summation on the first term is easy to estimate. In fact,
\begin{eqnarray*}
&&\sum_{k,j}w(Q_j^k)^{1-q}(\mathcal T_1^{\mathcal S, *}+\mathcal T_2^{\mathcal S, *})^q w(Q_j^k)^{\frac{q}{q'}}\|f_2\|_{L^{p_2}(\sigma_2)}^q\bigg(\int_{Q_j^k\setminus \Omega_{k+2}}f_1^{p_1}d\sigma_1\bigg)^{\frac{q}{p_1}}\\
&\lesssim& (\mathcal T_1^*+\mathcal T_2^*)^q \|f_2\|_{L^{p_2}(\sigma_2)}^q \bigg(\sum_{k,j}\int_{Q_j^k\setminus \Omega_{k+2}}f_1^{p_1}d\sigma_1\bigg)^{\frac{q}{p_1}}\\
&\le& (\mathcal T_1^*+\mathcal T_2^*)^q \|f_1\|_{L^{p_1}(\sigma_1)}^q\|f_2\|_{L^{p_2}(\sigma_2)}^q.
\end{eqnarray*}
It remains to estimate the summation on the second term. Let $\tilde {\mathcal G}$ be the principal cubes associated to $f_1$ and $\sigma_1$. We have
\begin{eqnarray*}
&&\sum_{k,j}w(Q_j^k)^{1-q}\bigg(\sum_{\tilde Q\subset Q_j^k\atop \tilde Q\in\mathcal Q_{k+2}}E_{\tilde Q}^{\sigma_1}f_1\int_{\tilde Q}\mathcal I_\alpha^{\mathcal S(R)}(1_{E(Q_j^k)}w, 1_{Q_j^k} f_2\sigma_2) d\sigma_1\bigg)^q\\
&\lesssim&\sum_{k,j}w(Q_j^k)^{1-q}\bigg(\sum_{\tilde{G}(\tilde Q)=\tilde{G}(Q_j^k)\atop\tilde Q\subset Q_j^k,  \tilde Q\in\mathcal Q_{k+2}}E_{\tilde Q}^{\sigma_1}f_1\int_{\tilde Q}\mathcal I_\alpha^{\mathcal S(R)}(1_{E(Q_j^k)}w, 1_{Q_j^k} f_2\sigma_2)d\sigma_1\bigg)^q\\
&&+\sum_{k,j}w(Q_j^k)^{1-q}\bigg(\sum_{\tilde Q\subset Q_j^k, \tilde Q\in\tilde{\mathcal G}\atop \tilde Q\in\mathcal Q_{k+2}}E_{\tilde Q}^{\sigma_1}f_1\int_{\tilde Q}\mathcal I_\alpha^{\mathcal S(R)}(1_{E(Q_j^k)}w, 1_{Q_j^k} f_2\sigma_2)d\sigma_1\bigg)^q\\
&:=&I_1+I_2.
\end{eqnarray*}
First, we estimate $I_1$. We have
\begin{eqnarray*}
I_1&\lesssim& \sum_{\tilde{G}\in \tilde{\mathcal G}}\sum_{k,j\atop \tilde{G}(Q_j^k)=\tilde{G}} (E_{\tilde G}^{\sigma_1}f_1)^q \int_{E(Q_j^k)}\mathcal I_\alpha^{\mathcal S(R)} (1_{Q_j^k} \sigma_1, 1_{Q_j^k} f_2\sigma_2)^q dw\\
&\le& \sum_{\tilde{G}\in \tilde{\mathcal G}} (E_{\tilde G}^{\sigma_1}f_1)^q \int_{\tilde{G}}\mathcal I_\alpha^{\mathcal S(R)} (1_{\tilde G} \sigma_1, 1_{\tilde G} f_2\sigma_2)^q dw\\
&\lesssim& (\mathcal T^{\mathcal S} +\mathcal T_2^{\mathcal S, *})^q \|f_2\|_{L^{p_2}(\sigma_2)}^q\sum_{\tilde{G}\in \tilde{\mathcal G}} (E_{\tilde G}^{\sigma_1}f_1)^q \sigma_1(\tilde{G})^{q/{p_1}}\quad\mbox{(by Lemma~\ref{lm:special})}\\
&\le& (\mathcal T^{\mathcal S} +\mathcal T_2^{\mathcal S, *})^q \|f_2\|_{L^{p_2}(\sigma_2)}^q \bigg(\sum_{\tilde{G}\in \tilde{\mathcal G}} (E_{\tilde G}^{\sigma_1}f_1)^{p_1} \sigma_1(\tilde{G})\bigg)^{q/{p_1}}\\
&\lesssim& (\mathcal T^{\mathcal S} +\mathcal T_2^{\mathcal S, *})^q \|f_1\|_{L^{p_1}(\sigma_1)}^q \|f_2\|_{L^{p_2}(\sigma_2)}^q.
\end{eqnarray*}
Next we estimate $I_2$. We have
\begin{eqnarray*}
I_2&\le& \sum_{k,j}w(Q_j^k)^{1-q}\bigg(\sum_{\tilde Q\subset Q_j^k, \tilde Q\in\tilde{\mathcal G}\atop \tilde Q\in\mathcal Q_{k+2}}(E_{\tilde Q}^{\sigma_1}f_1)^{p_1}\sigma_1(\tilde Q)\bigg)^{q/{p_1}}\\
&&\quad\times \bigg(\sum_{\tilde Q\subset Q_j^k, \tilde Q\in\tilde{\mathcal G}\atop \tilde Q\in\mathcal Q_{k+2}}\sigma_1(\tilde Q)^{-p_1'/{p_1}}
\bigg(\int_{\tilde Q}\mathcal I_\alpha^{\mathcal S(R)}(1_{E(Q_j^k)}w, 1_{Q_j^k} f_2\sigma_2)(x)d\sigma_1\bigg)^{p_1'}\bigg)^{q/{p_1'}}\\
&\le& \sum_{k,j}w(Q_j^k)^{1-q}\bigg(\sum_{\tilde Q\subset Q_j^k, \tilde Q\in\tilde{\mathcal G}\atop \tilde Q\in\mathcal Q_{k+2}}(E_{\tilde Q}^{\sigma_1}f_1)^{p_1}\sigma_1(\tilde Q)\bigg)^{q/{p_1}}\\
&&\quad\times \bigg(\int_{ Q_j^k}\mathcal I_\alpha^{\mathcal S(R)}(1_{E(Q_j^k)}w, 1_{Q_j^k} f_2\sigma_2)^{p_1'}d\sigma_1\bigg)^{q/{p_1'}}\\
&\le& (\mathcal T_1^{\mathcal S, *}+\mathcal T_2^{\mathcal S, *})^q \|f_2\|_{L^{p_2}(\sigma_2)}^q\sum_{k,j}\bigg(\sum_{\tilde Q\subset Q_j^k, \tilde Q\in\tilde{\mathcal G}\atop \tilde Q\in\mathcal Q_{k+2}}(E_{\tilde Q}^{\sigma_1}f_1)^{p_1}\sigma_1(\tilde Q)\bigg)^{q/{p_1}}\\
&&\hskip 70mm \qquad \mbox{(by Lemma~\ref{lm:special})}\\
&\le& (\mathcal T_1^{\mathcal S, *}+\mathcal T_2^{\mathcal S, *})^q \|f_2\|_{L^{p_2}(\sigma_2)}^q\bigg(\sum_{\tilde{G}\in \tilde{\mathcal G}} (E_{\tilde G}^{\sigma_1}f_1)^{p_1} \sigma_1(\tilde{G})\bigg)^{q/{p_1}}\\
&\le& (\mathcal T_1^{\mathcal S, *}+\mathcal T_2^{\mathcal S, *})^q\|f_1\|_{L^{p_1}(\sigma_1)}^q \|f_2\|_{L^{p_2}(\sigma_2)}^q.
\end{eqnarray*}

\section{Proof of Theorem~\ref{thm:main}: The Weak Type}\label{sec:s3}
In this section, we focus on the weak type inequality \eqref{eq:weak}. Again, we only need to consider \eqref{eq:dweak} and we assume that $f_1$ and $f_2$ are non-negative.
Notice that $q>1$. If \eqref{eq:dweak} holds, we see from  the Kolmogorov inequality that
\begin{eqnarray*}
&&\frac{1}{w(Q)}\int_Q \mathcal I_\alpha^{\mathcal S} (1_Q f_1\sigma_1, 1_Q f_2\sigma_2)dw\\&\lesssim&
\|\mathcal I_\alpha^{\mathcal S} (1_Q f_1\sigma_1, 1_Q f_2\sigma_2)\|_{L^{q,\infty}(Q, w/{w(Q)})}\\
&=& w(Q)^{-1/q}\|\mathcal I_\alpha^{\mathcal S} (1_Q f_1\sigma_1, 1_Q f_2\sigma_2)\|_{L^{q,\infty}(Q, w)}\\
&\le& \mathcal N_{\textup{weak}} w(Q)^{-1/q} \prod_{i=1}^2\|1_Q f_i\|_{L^{p_i}(\sigma_i)},
\end{eqnarray*}
where $Q\in\mathcal S$.
It follows that
\begin{equation}\label{eq:equi}
\int_Q \mathcal I_\alpha^{\mathcal S} (1_Q f_1\sigma_1, 1_Q f_2\sigma_2)dw\lesssim\mathcal N_{\textup{weak}} w(Q)^{1/{q'}} \prod_{i=1}^2\|1_Q f_i\|_{L^{p_i}(\sigma_i)}.
\end{equation}
Now we assume that \eqref{eq:equi} holds for any $Q\in\mathcal S$. For any $R\in\mathscr D$, we have
\begin{eqnarray*}
 \|\mathcal I_\alpha^{\mathcal S(R)} (f_1\sigma_1, f_2\sigma_2)\|_{L^{q,\infty}(w)}^q &\lesssim&\sup_k 2^{(k+1)q}w(\Omega_{k+1}).
\end{eqnarray*}
Denote $F_j^k=Q_j^k\cap \Omega_{k+1}$. By the discussion in Section~\ref{sec:s1},  we have
\[
  \mathcal I_\alpha^{\mathcal S(R)} (1_{Q_j^k}f_1\sigma_1, 1_{Q_j^k} f_2\sigma_2)(x)>2^k,\, x\in F_j^k.
\]
Then
\begin{eqnarray*}
&&2^{(k+1)q}w(\Omega_{k+1})\\
&\le& \sum_j 2^{(k+1)q}w(F_j^k)\\
&=& \sum_{j: w(F_j^k)\ge \delta w(Q_j^k)} 2^{(k+1)q}w(F_j^k)+\sum_{j:w(F_j^k)< \delta w(Q_j^k) } 2^{(k+1)q}w(F_j^k)\\
&\lesssim& \sum_{j: w(F_j^k)\ge \delta w(Q_j^k)}w(F_j^k)^{1-q}\bigg(\int_{F_j^k}\mathcal I_\alpha^{\mathcal S(R)} (1_{Q_j^k}f_1\sigma_1, 1_{Q_j^k} f_2\sigma_2)(x) dw\bigg)^q\\
&&\quad+ \delta \|\mathcal I_\alpha^{\mathcal S(R)} (f_1\sigma_1, f_2\sigma_2)\|_{L^{q,\infty}(w)}^q\\
&\lesssim& \mathcal N_{\textup{weak}}^q  \sum_j\prod_{i=1}^2\|1_{Q_j^k} f_i\|^q_{L^{p_i}(\sigma_i)}+ \delta \|\mathcal I_\alpha^{\mathcal S(R)} (f_1\sigma_1, f_2\sigma_2)\|_{L^{q,\infty}(w)}^q\\
 &\le&  \mathcal N_{\textup{weak}}^q \prod_{i=1}^2 \bigg(\sum_j\|1_{Q_j^k} f_i\|^{p_i}_{L^{p_i}(\sigma_i)} \bigg)^{q/{p_i}}+ \delta \|\mathcal I_\alpha ^{\mathcal S(R)}(f_1\sigma_1, f_2\sigma_2)\|_{L^{q,\infty}(w)}^q\\
 &\lesssim&\mathcal N_{\textup{weak}}^q  \prod_{i=1}^2\|f_i\|_{L^{p_i}(\sigma_i)}^q+ \delta \|\mathcal I_\alpha^{\mathcal S(R)} (f_1\sigma_1, f_2\sigma_2)\|_{L^{q,\infty}(w)}^q.
\end{eqnarray*}
Therefore, by letting $\delta$ be sufficiently small and using the monotone convergence theorem, we get that \eqref{eq:dweak} holds, regardless a constant independent of the weights.

We see from the above arguments that \eqref{eq:dweak} and \eqref{eq:equi} are equivalent. So we only need to give a characterization for \eqref{eq:equi}.
By the duality argument, it is easy to see that \eqref{eq:equi} is equivalent to the following,
\begin{eqnarray}
\bigg(\int_Q \mathcal I_\alpha ^{\mathcal S}(1_Q w, 1_Q f_2\sigma_2)^{p_1'}d\sigma_1\bigg)^{1/{p_1'}}&\lesssim&\mathcal N_{\textup{weak}} w(Q)^{1/{q'}}\|1_Q f_2\|_{L^{p_2}(\sigma_2)};\label{eq:dual}
\\
\bigg(\int_Q \mathcal I_\alpha^{\mathcal S} (1_Q f_1\sigma_1, 1_Q w)^{p_2'}d\sigma_2\bigg)^{1/{p_2'}}&\lesssim&\mathcal N_{\textup{weak}} w(Q)^{1/{q'}}\|1_Q f_1\|_{L^{p_1}(\sigma_1)}.\nonumber
\end{eqnarray}
Therefore, the necessity part follows immediately, i.e., $\mathcal T_1^{\mathcal S, *}, \mathcal T_2^{\mathcal S, *}\lesssim \mathcal N_{\textup{weak}}<\infty$. For the sufficiency part,
since
$p_1'\ge p_2$, we focus on \eqref{eq:dual}.
By Lemma~\ref{lm:special}, we know that $\mathcal N_{\textup{weak}}<\infty$. Moreover,
\[
  \mathcal N_{\textup{weak}}\simeq \mathcal T_1^{\mathcal S, *}+\mathcal T_2^{\mathcal S, *}.
\]

\end{document}